\documentclass[11pt,a4paper]{amsart}

\usepackage[a4paper,margin=1in]{geometry}
\usepackage[T1]{fontenc}
\usepackage[utf8]{inputenc}
\usepackage{lmodern}
\usepackage{amssymb,mathtools,bm}
\usepackage{bbm}
\allowdisplaybreaks
\usepackage{graphicx,xcolor}
\usepackage[colorlinks=true,linkcolor=blue,citecolor=blue,urlcolor=blue]{hyperref}
\usepackage{multirow}
\usepackage{array} 
\usepackage{booktabs}

\usepackage{setspace}
\onehalfspacing

\usepackage{amsthm}
\newtheorem{theorem}{Theorem}[section]
\newtheorem{lemma}[theorem]{Lemma}

\newtheorem{proposition}[theorem]{Proposition}
\theoremstyle{remark}
\newtheorem{remark}{Remark}

\usepackage[
    backend=biber,
    style=numeric,
    citestyle=numeric,
    sorting=nyt,
    maxnames=6,
    giveninits=true,
    doi=false,
    isbn=false,
    url=false,
    eprint=false
]{biblatex}

\DeclareNameAlias{sortname}{family-given}
\DeclareNameAlias{default}{family-given}

\newbibmacro*{author+year}{%
  \printnames{author}%
  \setunit{\space}%
  \printtext[parens]{\printfield{year}}}

\DeclareBibliographyDriver{article}{%
  \usebibmacro{author+year}%
  \newunit\newblock
  \printfield[titlecase]{title}%
  \newunit\newblock
  \printfield[journaltitle]{journaltitle}%
  \setunit*{\addspace}%
  \printfield{volume}%
  \iffieldundef{number}{}{(\printfield{number})}%
  \addcolon\space%
  \printfield{pages}%
  \finentry}

\renewbibmacro{in:}{}

\addbibresource{biblio.bib}

\date{\today}

\title[Adaptive Thresholding Variance Estimation on $\mathbb{S}^d$]{Adaptive Thresholding for Wavelet-Based Nonparametric Heteroskedastic Variance Estimation on the Sphere}

\author[C. Durastanti]{Claudio Durastanti}
\address[C. Durastanti]{Department of Basic and Applied Sciences for Engineering, Sapienza University of Rome.}
\email{claudio.durastanti@unipd.it}

\author[R. Shevchenko]{Radomyra Shevchenko}
\address[R. Shevchenko]{J. A. Dieudonn\'e Laboratory - C\^ote d'Azur University, and Centrale Méditerranée, Nice.}
\email{radomyra.shevchenko@univ-cotedazur.fr}

\date{\today}

\begin{document}

\begin{abstract}
This paper addresses the nonparametric estimation of a spatially varying, heteroskedastic variance function on the unit sphere within a regression framework. While adaptive regression estimation is well-established on manifolds, characterizing localized noise structures presents unique theoretical obstacles due to bias propagation from the unknown mean function. To circumvent this, we propose a fully data-driven, multiresolution estimator based on localized spherical frames, namely, needlets, combined with a hard-thresholding protocol and a sample-splitting scheme. The approach exploits the excellent spatial and frequency localization properties of needlets to adaptively capture the local features of the variance function. We prove that the proposed estimator achieves the minimax-optimal rate of convergence over spherical Besov spaces under standard loss functions, exhibiting spatial adaptivity without requiring prior knowledge of the regularity of the variance function. This adaptivity highlights the efficacy of the method in analyzing spherical data characterized by complex heteroskedastic errors, with potential applications in fields such as cosmology, environmental modeling, and geophysics.\\
\textbf{Keywords:} Nonparametric regression, Heteroskedasticity, Variance function estimation, Spherical data, Needlets, Minimax rate, Besov spaces.\\
\textbf{2010 MSC:} 62G08, 62G20, 65T60
\end{abstract}

\maketitle
\section{Introduction}\label{sec:intro}

In many modern applications, regression models are naturally defined on directional and manifold domains rather than in classical Euclidean spaces. Relevant examples include the analysis of cosmic microwave background radiation in cosmology~\cite{gerbinoetal}, global climate and environmental monitoring~\cite{cortereal}, and the modeling of geophysical or oceanographic processes~\cite{ckrl00}. In these complex spatial settings, the observational noise typically varies across locations, inducing a heteroskedastic regression structure. While nonparametric methods for adaptive regression function estimation are well-established on manifolds, the problem of estimating a spatially varying, heteroskedastic variance function has received comparatively little attention in non-Euclidean contexts. Consequently, most existing frameworks rely on the assumption of homoskedastic errors, which substantially simplifies the theoretical analysis but is rarely satisfied in practice.

Variance function estimation in nonparametric regression has a long history in the Euclidean literature. Early contributions include difference-based constructions~\cite{rice84} and direct residual-based procedures~\cite{mull87}. The fundamental effect of estimating an unknown mean function on the subsequent variance estimation was systematically studied by~\cite{hc89}, while related quadratic-form methods were analyzed by~\cite{dmw98}. Although difference-based estimators can circumvent explicit mean estimation, their performance remains highly sensitive to the experimental design and the underlying regularity of the regression function~\cite{bl07,mbwf05}. Subsequent adaptive and minimax analyses clarified how the joint smoothness of the mean and variance functions governs the attainable convergence rates~\cite{CW08,wbc08,sgwh20}. These results establish the Euclidean benchmark for the non-Euclidean problem considered here. Related work has also expanded into minimax-optimal testing for heteroskedasticity~\cite{kk25}.

To capture localized features on the sphere and related manifolds, multiscale frame methods, most notably needlets, have become an important paradigm. In density estimation,~\cite{BKMP09} introduced adaptive needlet thresholding procedures that achieve minimax-optimal rates under $L^p$ losses. Extensions to regression models and spin spaces were subsequently developed by~\cite{DGM12,Mon11}, while~\cite{Dur16} established near-optimal global thresholding strategies. Further developments include adaptive estimation of derivatives on the torus~\cite{dt23}, applications to random coefficient binary choice models~\cite{gp18}, and needlet-based uncertainty quantification via asymptotically valid confidence bands~\cite{knp11}. 

This multiscale framework is particularly crucial on directional domains because standard difference-based constructions, which rely on ordered covariates or local Cartesian neighborhoods, do not transfer directly to a rotationally invariant spherical design. While geodesic or graph-based alternatives can be conceived, they do not naturally provide the simultaneous multiscale localization and Besov adaptivity supplied by needlets. By contrast, needlets provide localized multiscale representations and a canonical route to adaptation over spherical Besov classes.

Recent years have witnessed a surge in nonparametric methodology for directional and manifold-valued data, including regression on nonstandard spaces and geometric statistical methods (see, among others,~\cite{cm24,schotz22}).  Often, the estimation is based on some generalizations of kernel-based procedures approached via the Fréchet mean (as, for instance, in \cite{PetersenMueller, im2025local}) or via the tangent space or higher order approximations (see, for example, \cite{wiem2026nonparametric,pelletier2006non}). For instance, a recent approach to regression on the sphere combines spectral basis decompositions with rotations (cf.~\cite{jeon2024density}). However, these developments focus almost exclusively on the conditional mean or density, leaving the question of variance estimation in heteroskedastic settings unaddressed.

To the best of our knowledge, the adaptive nonparametric estimation of a spatially varying variance function has not previously been developed on non-Euclidean domains. This paper bridges this substantial methodological gap. While the core challenge in this setup lies in designing a valid de-biasing protocol that preserves minimax optimality when the mean is unknown, we solve this problem for the unit sphere $\mathbb{S}^d$ in arbitrary dimension $d \ge 2$. We propose a fully data-driven, sample-splitting needlet procedure that first recovers the regression function on independent subsamples, and subsequently constructs bias-corrected quadratic quantities to estimate the variance function. Adaptive hard thresholding is then applied to the resulting needlet coefficients. We establish adaptive $L^p$-risk bounds over spherical Besov classes and prove that they are minimax-optimal throughout the parameter space, except in a minor boundary regime.

More precisely, we consider observational pairs $(X_i,Y_i)$, $i=1,\ldots,N$, where the covariates $X_i$ lie on the unit sphere $\mathbb{S}^d$, and the responses $Y_i$ are generated by the heteroskedastic regression model
\begin{equation}\label{eq:model}
Y_i = g(X_i) + \sigma(X_i)\varepsilon_i, \qquad i=1,\ldots,N,
\end{equation}
where $g:\mathbb S^d\to\mathbb R$ is the unknown regression function, $\sigma:\mathbb S^d\to(0,\infty)$ is the unknown scale function, and $\{\varepsilon_i\}$ are independent, mean-zero, sub-Gaussian errors with unit variance. The sub-Gaussian assumption allows us to leverage sharp deviation bounds for the needlet coefficients; furthermore, since squared sub-Gaussian variables are sub-exponential, it enables explicit control of higher-order moments during the variance estimation steps. The target of our inference is the variance function
\begin{equation*}\label{eqn:varscale}
V(x)=\sigma(x)^2, \qquad x \in \mathbb{S}^d.
\end{equation*}
When $V$ varies across the sphere, estimation procedures calibrated under a homoskedastic working model may retain consistency for the regression function, but they become statistically inefficient and lead to miscalibrated uncertainty quantification. In particular, because the covariance structure of the empirical needlet coefficients depends on the local noise level, a constant-variance approximation distorts standard error estimates and threshold choices. Estimating $V$ is therefore essential for both adaptive function reconstruction and valid statistical inference.

The $L^p$-risk provides the natural metric of accuracy in this framework, quantifying the expected $L^p$-norm of the error between an estimator $\widehat{V}_N$ and the true variance function $V$. Our minimax analysis characterizes how this risk decreases with $N$ while simultaneously adapting to the unknown, potentially different smoothness degrees of $g$ and $V$. We show that the proposed estimator attains the minimax risk over the regularity class up to multiplicative constants, satisfying the standard adaptive optimality benchmarks established in the literature~\cite{donoho98,tsyb09,WASA}.

Although our methodology and theoretical analysis are implemented on the unit sphere $\mathbb S^d$, the mathematical architecture of our results is applicable beyond this specific manifold. The fundamental components underlying our construction—namely, the spectral decomposition of the Laplace--Beltrami operator, the existence of well-localized multiscale frames, and compatible localized cubature formulas—are available on a broad class of compact Riemannian manifolds (see, for instance, ~\cite{knp11} and the references therein). 

We focus on the sphere here primarily for two reasons. First, spherical needlets admit an explicit construction via spherical harmonics, leading to sharper constants, highly transparent proofs, and a streamlined presentation. Second, the sphere represents the natural domain for the aforementioned applications in cosmology and geophysics, where the geometry is intrinsic to the data generation process. The insights developed here provide a clear blueprint for extensions to more general compact manifolds.

\subsubsection*{Plan of the paper}
The remainder of the paper is organized as follows. Section~\ref{sec:back} introduces the necessary background on harmonic and needlet analysis on the sphere, alongside the formal definition of spherical Besov spaces. Section~\ref{sec:framework} formalizes the heteroskedastic regression framework, details the sample-splitting needlet estimation protocol, and establishes key auxiliary lemmas. Section~\ref{sec:stocresults} presents our main theoretical contributions concerning the minimax rates. Section \ref{sec:numerics} collects some numerical illustrations. Finally, Section~\ref{sec:proofs} collects the proofs of all the theoretical results.

\section{Background and preliminary results}\label{sec:back}
In this section, we collect essential background material on harmonic and needlet analysis on the d-dimensional unit sphere, alongside the functional characterization of spherical Besov spaces. 
Section~\ref{sec:needlets} reviews the algebraic construction and primary localization properties of spherical needlet frames, drawing primarily on~\cite{MP11,NPW06a,NPW06b}. 
Section~\ref{sec:besov} formally introduces Besov spaces on $\mathbb{S}^d$ and summarizes their fundamental approximation and topological embedding properties, following~\cite{bcd11,BKMP09,WASA}.

\subsubsection*{Notation} 
Let $x$ denote a point on the unit sphere $\mathbb{S}^d$, and let $\mathrm{d}x$ be the uniform Lebesgue surface measure on $\mathbb{S}^d$ scaled such that $\int_{\mathbb{S}^d} \mathrm{d}x = 1$. The total unnormalized surface area of the sphere is denoted by $\omega_d := 2\pi^{(d+1)/2}/\Gamma((d+1)/2)$. For a measurable function $f:\mathbb{S}^d \to \mathbb{R}$, its $L^p$-norm is defined as
\[
\|f\|_p = \|f\|_{L^p(\mathbb{S}^d)} = \left( \int_{\mathbb{S}^d} |f(x)|^p\,\mathrm{d}x \right)^{1/p}
\]
for $1 \le p < \infty$, with the standard adaptation $\|f\|_\infty = \operatorname{ess\,sup}_{x \in \mathbb{S}^d} |f(x)|$ for $p = \infty$. In particular, the space $L^2(\mathbb{S}^d)$ is the Hilbert space of square-integrable functions with respect to the normalized measure $\mathrm{d}x$, equipped with the inner product $\langle f, g \rangle = \int_{\mathbb{S}^d} f(x)g(x)\,\mathrm{d}x$. Throughout the paper, the notation $a \lesssim b$ indicates that $a \le C b$ for some positive constant $C$ that depends at most on the structural parameters of the model (such as $d$, $p$, and the Besov regularities) but is strictly independent of the sample size $N$ and the resolution levels. We write $a \simeq b$ when both $a \lesssim b$ and $b \lesssim a$ hold simultaneously.

\subsection{Needlet frames on the sphere and their properties}\label{sec:needlets}
We recall the algebraic construction of spherical needlets and summarize their fundamental properties; see~\cite{NPW06a,NPW06b} for comprehensive details. 
Let $\mathcal{H}_\ell(\mathbb{S}^d)$ denote the space of spherical harmonics of degree $\ell$ on the $d$-dimensional unit sphere $\mathbb{S}^d \subset \mathbb{R}^{d+1}$ for $d \ge 2$. The dimension of this subspace is given by
\[
Z_{d,\ell} = \frac{2\ell+d-1}{\ell+d-1} \binom{\ell+d-1}{\ell} \simeq \ell^{d-1} \quad \text{as } \ell \to \infty.
\]

For each degree $\ell \ge 0$, we choose an orthonormal basis of real-valued spherical harmonics $\{ Y_{\ell,m} : m = 1, \ldots, Z_{d,\ell} \}$ for $\mathcal{H}_\ell(\mathbb{S}^d)$. The collection of all such harmonics across $\ell \ge 0$ constitutes a complete orthonormal basis of $L^2(\mathbb{S}^d)$. The spaces $\mathcal{H}_\ell(\mathbb{S}^d)$ are mutually orthogonal, and any square-integrable function $f \in L^2(\mathbb{S}^d)$ admits the orthogonal harmonic expansion
\[
f(x) = \sum_{\ell \ge 0} P_\ell f(x) = \sum_{\ell \ge 0} \sum_{m=1}^{Z_{d,\ell}} a_{\ell,m} Y_{\ell,m}(x),
\]
where $a_{\ell,m} = \int_{\mathbb{S}^d} f(x) Y_{\ell,m}(x)\,\mathrm{d}x$, and $P_\ell: L^2(\mathbb{S}^d) \to \mathcal{H}_\ell(\mathbb{S}^d)$ represents the orthogonal projection operator. By the addition theorem for spherical harmonics, the reproducing kernel of $\mathcal{H}_\ell(\mathbb{S}^d)$ can be written intrinsically as
\[
\sum_{m=1}^{Z_{d,\ell}} Y_{\ell,m}(x) Y_{\ell,m}(y) = \frac{Z_{d,\ell}}{\omega_d} \frac{C_\ell^{\left(\frac{d-1}{2}\right)}(\langle x, y \rangle)}{C_\ell^{\left(\frac{d-1}{2}\right)}(1)},
\]
where $\langle x, y \rangle$ is the standard Euclidean inner product in $\mathbb{R}^{d+1}$, $\omega_d$ is the surface area of $\mathbb{S}^d$ defined in Section~\ref{sec:back}, and $C_\ell^{(\lambda)}$ denotes the Gegenbauer polynomial of degree $\ell$ with parameter $\lambda = (d-1)/2$ (see~\cite{MP11}).

Fix a scaling parameter $B>1$. Following the Littlewood–Paley decomposition framework, there exist a set of cubature points and positive weights $\{ (\xi_{j,k}, \lambda_{j,k}) : j\ge1,\, k=1,\ldots,K_j \}$ that enable exact quadrature for spherical polynomials up to degree $\lfloor 2B^{j+1} \rfloor$. The total number of cubature points at resolution level $j$ scales quadratically with the dimension, satisfying $K_j \simeq B^{dj}$, while the weights satisfy $\lambda_{j,k} \simeq B^{-dj}$ uniformly. Each pair $(\xi_{j,k}, \lambda_{j,k})$ can be interpreted geometrically as a pixel centered at $\xi_{j,k}$ with an associated area proportional to $\lambda_{j,k}$.

The spherical needlets are then defined by
\[
\psi_{j,k}(x) = \sqrt{\lambda_{j,k}} \sum_{\ell \in \Lambda_j} b\!\left(\frac{\ell}{B^j}\right) \frac{Z_{d,\ell}}{\omega_d} \frac{C_\ell^{\left(\frac{d-1}{2}\right)}(\langle x, \xi_{j,k} \rangle)}{C_\ell^{\left(\frac{d-1}{2}\right)}(1)}, \qquad x \in \mathbb{S}^d,
\]
or equivalently via the spherical harmonics basis as
\[
\psi_{j,k}(x) = \sqrt{\lambda_{j,k}} \sum_{\ell \in \Lambda_j} b\!\left(\frac{\ell}{B^j}\right) \sum_{m=1}^{Z_{d,\ell}} Y_{\ell,m}(\xi_{j,k}) Y_{\ell,m}(x),
\]
where $\Lambda_j = \{\ell \in \mathbb{N} : B^{j-1} \le \ell \le B^{j+1}\}$ and $b:\mathbb{R}\to\mathbb{R}$ is a smooth window function satisfying:
\begin{enumerate}
  \item $b$ has compact support in $[B^{-1},B]$;
  \item $b \in C^\infty(\mathbb{R})$;
  \item the partition of unity property: $\sum_{j\ge 0} b^2\left(\frac{\ell}{B^j}\right) = 1$ for all $\ell \ge 1$.
\end{enumerate}

Needlets enjoy a crucial double localization property: they are simultaneously localized in both the spatial and frequency domains. In the frequency domain, Property (1) ensures that each needlet is constructed as a localized band-limited combination of multipole components. As a consequence of Property (2) and the regularity of $b$, for every arbitrarily large $M \in \mathbb{N}$, there exists a constant $C_M>0$ such that
\[
|\psi_{j,k}(x)| \le \frac{C_M B^{j\frac{d}{2}}}{\left(1 + B^j d_{\mathbb{S}^d}(x,\xi_{j,k})\right)^M},
\]
where $d_{\mathbb{S}^d}$ is the geodesic distance on the sphere. To ensure the integrability of the tails across $\mathbb{S}^d$, $M$ is typically chosen such that $M > d$. This sharp spatial localization guarantees that needlet coefficients extract local high-frequency details, a feature essential for adaptive variance function estimation under spatial heteroskedasticity.

Consequently, for any $p \in [1,\infty]$, the $L^p$-norm of a needlet scales explicitly with the dimension $d$, satisfying
\begin{equation}\label{eqn:norm}
c_{\psi,p} B^{jd\left(\frac{1}{2} - \frac{1}{p}\right)} \le \|\psi_{j,k}\|_{L^p(\mathbb{S}^d)} \le C_{\psi,p} B^{jd\left(\frac{1}{2} - \frac{1}{p}\right)}
\end{equation}
where $0 < c_{\psi,p} \le C_{\psi,p} < \infty$ are constants independent of $j$ and $k$.

Recall that a countable collection of functions $\{ e_i \}_{i\ge 0}$ forms a tight frame for $L^2(\mathbb{S}^d)$ if 
$\sum_{i\ge0} |\langle f,e_i\rangle|^2 = c\| f \|_{L^2(\mathbb{S}^d)}^2$ for all $f \in L^2(\mathbb{S}^d)$. The spherical needlets constructed above form a tight frame, with $c=1$ allowing for perfect reconstruction without numerical inversion. Finally, by using Property (3), every $f \in L^2(\mathbb{S}^d)$ admits the multiresolution expansion
\begin{equation}\label{eqn:needf}
f(x) = \sum_{j\ge 1} \sum_{k=1}^{K_j} f_{j,k}\, \psi_{j,k}(x), \qquad f_{j,k} = \int_{\mathbb{S}^d} f(x)\, \psi_{j,k}(x)\,\mathrm{d}x.
\end{equation}
The coefficients $\{f_{j,k}:j \geq 1,\, k=1,\ldots, K_j\}$ thus characterize localized multiscale properties on the sphere, ensuring stable functional reconstructions since truncated needlet expansions preserve structural features under local regularity variations.

\subsection{Besov spaces on the sphere}\label{sec:besov}
We recall the formal definition of spherical Besov spaces and their equivalent characterization in terms of needlet coefficients. For rigorous constructions, structural properties, and detailed proofs, we refer the reader to~\cite{bcd11,BKMP09,WASA}. Let $f \in L^r(\mathbb{S}^d)$ admit the needlet tight-frame expansion given by \eqref{eqn:needf}.

Besov spaces on the $d$-dimensional sphere can be intrinsically defined via the approximation errors attained by suitable classes of smooth functions. Given a nested scale of functional subspaces $\mathcal{G}_t \subset L^r(\mathbb{S}^d)$, indexed by a continuous parameter $t \ge 0$ (such as the spaces of spherical polynomials of degree at most $\lfloor t \rfloor$), we define the best approximation error of $f$ as
\[
G_t(f;r) = \inf_{h \in \mathcal{G}_t} \| f - h \|_{L^r(\mathbb{S}^d)}.
\]
For a fixed dilation parameter $B > 1$, a function $f$ belongs to the spherical Besov space $B^s_{r,q}(\mathbb{S}^d)$ with smoothness $s > 0$, spatial integrability $1 \le r \le \infty$, and inter-scale summability $1 \le q \le \infty$ if the following dyadic sequence condition holds:
\[
\sum_{j \ge 0} B^{j s q} [G_{B^j}(f;r)]^q < \infty.
\]

In the needlet framework, this analytical condition can be mapped onto the decay properties of the frame coefficients. Specifically, a function $f \in L^r\left(\mathbb{S}^d\right)$ belongs to $B^s_{r,q}(\mathbb{S}^d)$ if and only if the sequence $w_j(f) := B^{js} \left[\sum_{k=1}^{K_j}\left( |f_{j,k}| \|\psi_{j,k} \|_{L^r(\mathbb{S}^d)} \right)^r\right]^{1/r}$ belongs to the sequence space $\ell^q(\mathbb{N}_0)$. Consequently, by inserting the norm scaling established in \eqref{eqn:norm}, the Besov space $B^s_{r,q}(\mathbb{S}^d)$ is equipped with the equivalent norm
\[
\| f \|_{B^{s}_{r,q}\left(\mathbb{S}^d\right)} = \| f \|_{L^r(\mathbb{S}^d)} + \left[ \sum_{j \ge 0} B^{j q \left( s + \frac{d}{2} - \frac{d}{r} \right)} \left( \sum_{k=1}^{K_j} |f_{j,k}|^r \right)^{\frac{q}{r}} \right]^{\frac{1}{q}} < \infty.
\]
The hyperparameters carry a clear statistical interpretation: $s$ measures the global smoothness, governing the power-law decay of the coefficients across resolution levels $j$; $r$ quantifies spatial integrability, regulating local spatial variations and burstiness at a fixed scale; and $q$ fine-tunes the inter-scale summability.

Intuitively, $B^s_{r,q}(\mathbb{S}^d)$ consists of functions whose $s$-th fractional derivatives are bounded in an $L^r$ sense. Spherical Besov spaces interpolate between classical Sobolev and H\"older spaces, providing a flexible framework to characterize both spatial smoothness and sparsity. For instance, when $r=q=2$, $B^s_{2,2}\left(\mathbb{S}^d\right) = H^s(\mathbb{S}^d)$ coincides with the standard fractional Sobolev space on the sphere, whereas the classical s-H\"older spaces correspond to $C^s(\mathbb{S}^d) = B^{s}_{\infty,\infty}(\mathbb{S}^d)$ for any non-integer $s>0$.

\subsubsection*{Besov balls} For a fixed radius $R>0$, the Besov ball of radius $R$ in $B^{s}_{r,q}\left(\mathbb{S}^d\right)$ is defined as
\[
B^{s}_{r,q}(R) = \left\{ f \in B^{s}_{r,q}(\mathbb{S}^d) \;:\; \|f\|_{B^{s}_{r,q}(\mathbb{S}^d)} \le R \right\}.
\]
These subsets provide a natural scale of compact or bounded function classes indexed by regularity, serving as the standard functional parameter spaces for minimax-optimal and adaptive estimation theory. When no ambiguity arises, we shall write simply $B^{s}_{r,q}$ or $B^{s}_{r,q}(R)$ without explicitly indicating the domain $\mathbb{S}^d$.

\subsubsection*{Norm embeddings and coefficient inequalities.} Spherical Besov spaces satisfy standard topological embedding properties (see \cite{BKMP09}). For any $1 \le r_1 \le r_2 \le \infty$, we have the continuous embeddings
\[
B^{s}_{r_2,q}(\mathbb{S}^d) \subset B^{s}_{r_1,q}(\mathbb{S}^d), \qquad B^{s}_{r_1,q}(\mathbb{S}^d) \subset B^{s - d\left(\frac{1}{r_1}-\frac{1}{r_2}\right)}_{r_2,q}(\mathbb{S}^d).
\]
In terms of the underlying needlet coefficients, these embeddings manifest at each resolution scale $j$ via H\"older's inequality as

\begin{equation}\label{eqn:embedding}
\sum_{k=1}^{K_j} |f_{j,k}|^{r_2} \le \sum_{k=1}^{K_j} |f_{j,k}|^{r_1}, \qquad \sum_{k=1}^{K_j} |f_{j,k}|^{r_1} \le K_j^{1-\frac{r_1}{r_2}} \sum_{k=1}^{K_j} |f_{j,k}|^{r_2}.
\end{equation}

\subsubsection*{Stability under multiplication.} 
Under the condition $s > d/r$, the space $B^{s}_{r,q}(\mathbb{S}^d)$ becomes a Banach algebra under pointwise multiplication. Specifically, if $f_1, f_2 \in L^\infty(\mathbb{S}^d) \cap B^s_{r,q}\left(\mathbb{S}^d\right)$, adapting the Euclidean product estimates (see~\cite[Corollary~2.86]{bcd11}) via localization arguments yields $f_1 f_2 \in B^{s}_{r,q}(\mathbb{S}^d)$. In particular,
\begin{equation*}\label{eqn:quadraticbesov} 
f \in B^{s}_{r,q}(\mathbb{S}^d) \implies f^2 \in B^{s}_{r,q}(\mathbb{S}^d).
\end{equation*}
This algebra property is crucial for the heteroskedastic regression model considered in this paper. Since nonparametric variance function estimation inherently relies on quadratic operations applied to residuals, the Besov algebra structure ensures that the underlying smoothness of the mean function is preserved under squaring, preventing unexpected regularizing or roughening effects.

\subsubsection*{Jackson-type inequalities.} Functions embedded in Besov spaces enjoy precise approximation properties under truncated needlet expansions. If $f \in B^s_{r,q}(\mathbb{S}^d)$ with $s > 0$, Jackson-type inequalities guarantee that the high-frequency residual tail beyond a cut-off resolution level $J$ is controlled exponentially by $s$. Formally, the $L^r$-norm of the tail satisfies
\begin{equation*}\label{eqn:jackson}
\left\| \sum_{j \ge J} \sum_{k=1}^{K_j} f_{j,k} \psi_{j,k} \right\|_{L^r(\mathbb{S}^d)} \le C_{s,r,\psi} B^{-J s} \| f \|_{B^s_{r,q}(\mathbb{S}^d)},
\end{equation*}
where the constant $C_{s,r,\psi} > 0$ is independent of both $f$ and the truncation level $J$ (see~\cite[Theorem 6.2]{NPW06b}).

\subsubsection*{Mixed-norm Jackson inequalities.} By blending the spatial localization of needlets with Nikol'skii-type inequalities on compact domains, we can bound the high-frequency tail across differing $L^p$ scales. If $p > r$, moving to a finer integration space introduces a structural dimensionality penalty due to spatial compression. Specifically, provided that $s > d\left(\frac{1}{r}-\frac{1}{p}\right)$, summing the resulting geometric series yields:
\[
\left\| \sum_{j\ge J} \sum_{k=1}^{K_j} f_{j,k} \psi_{j,k} \right\|_{L^p(\mathbb{S}^d)} \lesssim B^{-J\big(s-d(\frac{1}{r}-\frac{1}{p})\big)}\|f\|_{B^s_{r,q}}.
\]
Conversely, if $p \le r$, standard spatial domain inclusions yield the unpenalized bound:
\[
\left\| \sum_{j\ge J} \sum_{k=1}^{K_j} f_{j,k} \psi_{j,k}\right\|_{L^p(\mathbb{S}^d)} \lesssim B^{-J s}\left\|f\right\|_{B^s_{r,q}}.
\]
These mixed-norm bounds provide the analytical foundation required to establish sharp, spatially adaptive risk upper bounds for our variance estimator.

\section{Statistical framework and estimation procedure}\label{sec:framework}
In this section, we formalize the nonparametric regression model on the $d$-dimensional unit sphere under spatially varying, heteroskedastic noise conditions, and characterize its structure within the needlet domain. We then introduce our multi-stage estimation strategy designed to recover both the unknown conditional mean and variance functions. Our presentation highlights how the localized multiresolution architecture of needlets captures spatial variations across different scales, and details the de-biasing protocol required to circumvent non-trivial bias propagation under unknown directional design distributions and heteroskedastic errors.

\subsection{The stochastic model}\label{sec:model}
We now specify the stochastic and geometric assumptions underlying our estimation framework. The observations are modeled as noisy samples from a random field on the $d$-dimensional sphere, where the target mean and variance functions are assumed to lie within the Besov balls introduced in Section~\ref{sec:back}. 

Formally, we consider $N$ independent and identically distributed observational pairs \[\left\{(X_i, Y_i):i=1,\ldots, N \right\},\]  satisfying the nonparametric regression model ~\eqref{eq:model} on the sphere with spatially varying heteroskedastic noise:
\begin{equation*}
    Y_i = g(X_i) + \sigma(X_i)\varepsilon_i, \quad i = 1,\ldots,N,
\end{equation*}
where the model components comply with the following structural conditions:
\begin{itemize}
    \item The covariates $\{X_i\}_{i=1}^N$ are i.i.d.\ random variables distributed uniformly on $\mathbb{S}^d$. Their common probability law is given directly by $\mathbb{P}(X_i \in \mathrm{d}x) = \mathrm{d}x$.
    \item The unknown regression function $g : \mathbb{S}^d \to \mathbb{R}$ represents the conditional mean $g(x) = \mathbb{E}[Y_i \mid X_i = x]$ and is globally bounded, with $\|g\|_\infty \le G < \infty$.
    \item The standard deviation function $\sigma : \mathbb{S}^d \to (0,\infty)$ dictates the local noise amplitude and is bounded from above and below, such that $0 < \sigma_{\min} \le \sigma(x) \le S < \infty$ for all $x \in \mathbb{S}^d$.
    \item The errors $\{\varepsilon_i\}_{i=1}^N$ are i.i.d.\ random variables independent of $\{X_i\}_{i=1}^N$. They are centered with unit variance, $\mathbb{E}[\varepsilon_i] = 0$, $\mathbb{E}[\varepsilon_i^2] = 1$, and exhibit sub-Gaussian tails. Formally, there exists a constant $\kappa > 0$ such that $\mathbb{E}[\exp(t \varepsilon_i)] \le \exp(\kappa^2 t^2 / 2)$ for all $t \in \mathbb{R}$. This implies that all higher-order absolute moments $\gamma_m = \mathbb{E}[|\varepsilon_i|^m]$ are finite and well-controlled for $m \ge 3$.
\end{itemize}
The target of our statistical inference is the conditional variance function $V(x) = \operatorname{Var}(Y_i \mid X_i = x) = \sigma^2(x)$ for $x \in \mathbb{S}^d$. For presentation convenience, the sample size $N$ is assumed to be even.

\subsubsection*{Needlet decomposition.} Utilizing the tight frame properties of the spherical needlet system discussed in Section~\ref{sec:needlets}, both the mean function $g$ and the variance function $V$ admit stable, localized multiresolution expansions:
\begin{equation*}\label{eqn:coeff}
\begin{split}
    g(x) &= \sum_{j\ge 0}\sum_{k=1}^{K_j} g_{j,k}\,\psi_{j,k}(x), \qquad g_{j,k} = \int_{\mathbb{S}^d} g(x)\psi_{j,k}(x)\,\mathrm{d}x,\\
    V(x) &= \sum_{j\ge 0}\sum_{k=1}^{K_j} v_{j,k}\,\psi_{j,k}(x), \qquad v_{j,k} = \int_{\mathbb{S}^d} V(x)\psi_{j,k}(x)\,\mathrm{d}x.
\end{split}
\end{equation*}
The recovery of $V(x)$ is therefore equivalent to estimating the sequence of its continuous needlet coefficients $\{v_{j,k}\}$ from the discrete dataset.

\subsubsection*{Heteroskedastic structure.} The scale function $\sigma(\cdot)$ characterizes the spatial inhomogeneity of the noise across the directional domain. This heteroskedastic profile inherently alters the local statistical properties of the model: regions characterized by high noise energy require a more conservative thresholding regularization to suppress spurious stochastic fluctuations, whereas regions embedded in low-noise environments allow for finer spatial adaptations.

By mapping the variance function $V(x)$ onto its multiscale coefficients $\{v_{j,k}\}$, we obtain a simultaneous description of this spatial variation in both frequency and space. Low-frequency coefficients, corresponding to small resolution levels $j$, capture macroscopic, global trends in the noise intensity, while high-frequency coefficients, that is, large $j$, represent highly localized anomalies and rapid spatial transitions. This double localization is particularly convenient for establishing minimax data-driven procedures, as it allows for a pixel-by-pixel and scale-by-scale thresholding protocol that responds dynamically to the local empirical signal-to-noise ratio. These coefficients form the mathematical foundation for the adaptive estimation architecture detailed in the next subsection.

\subsection{Needlet-based estimation procedure}\label{sec:est}
We construct adaptive estimators for the conditional mean and variance functions $g$ and $V=\sigma^2$ in the heteroskedastic spherical regression framework by leveraging the simultaneous space–frequency localization of spherical needlets. This multi-stage procedure yields fully data-driven, spatially adaptive estimators capable of accommodating localized noise patterns while preserving the structural features of the target functions across varied smoothness regimes.

The estimation architecture relies on embedding properties within spherical Besov balls. Formally, we assume that the conditional mean and variance functions satisfy
\[
g \in B^{\alpha}_{\rho,q'}(R_g), \qquad V \in B^{\beta}_{\mu,q''}(R_V),
\]
where $B^s_{r,q}(R)$ denotes the Besov ball defined in Section~\ref{sec:besov}. To guarantee the continuity and pointwise bounded evaluation of these functional targets on $\mathbb{S}^d$, we require the baseline regularities to satisfy $\alpha > d/\rho$ and $\beta > d/\mu$.

To implement our variance recovery protocol, we introduce the second-moment compound function
\[
h(x) = g(x)^2 + V(x), \qquad x \in \mathbb{S}^d,
\]
which inherits its analytical smoothness from the joint regularities of $g$ and $V$. By virtue of the multiplication algebra property of spherical Besov spaces under the stated constraints, it follows that $h \in B^{s_0}_{r_0,q_0}(R_h)$, where the effective parameters are given by
\[
s_0 = \min(\alpha,\beta), \qquad r_0 = \min(\rho,\mu),
\]
while $q_0 = q'$ if $s_0=\alpha < \beta$, $q_0 = q''$ if $s_0=\beta < \alpha$, and $q_0=\max(q', q'')$ when $\alpha=\beta$. The radius $R_h > 0$ is a structural constant depending continuously on $R_g$ and $R_V$. Recovering the compound function $h$ as an intermediate target allows us to isolate and reconstruct the variance function via the algebraic decoupling
\[
V(x) = h(x) - g(x)^2,
\]
effectively separating the estimation problem into two interacting scales of regularity: one governing the conditional mean $g$ and the other dictating the second-moment field $h$.

The technical implementation of the estimation framework consists of the following algorithmic steps:
\begin{itemize}
\item[\textbf{Step 1:}] \textit{Adaptive thresholding estimation of the mean function $g$.} We first construct a non-linear hard-thresholding needlet estimator for $g$, adapting the foundational framework of~\cite{DGM12,Mon11}. Let
\[
\widehat{g}_{j,k} = \frac{1}{N} \sum_{i=1}^{N} Y_i \psi_{j,k}(X_i)
\]
denote the empirical mean needlet coefficients computed over the dyadic resolution indices $\{(j,k) : j=0,\ldots,J_N-1, \, k=1,\ldots,K_j\}$. The global truncation level $J_N$ is selected to satisfy the sample size scaling
\begin{equation}\label{eqn:JN}
B^{dJ_N} \simeq \frac{N}{\log N}.
\end{equation}
A hard-thresholding protocol is then applied to these empirical coefficients, defining
\[
\widehat{g}^{\,T}_{j,k} = \widehat{g}_{j,k} \, \mathbf{1}\{ |\widehat{g}_{j,k}| \ge \kappa_g \tau_N \},
\]
where $\kappa_g>0$ is a universal threshold constant and $\tau_N = \sqrt{(\log N)/N}$. The adaptive estimator of the mean function is then reconstructed via the frame expansion
\[
\widehat{g}(x) = \sum_{j=0}^{J_N-1} \sum_{k=1}^{K_j} \widehat{g}^{\,T}_{j,k} \psi_{j,k}(x).
\]

\item[\textbf{Step 2:}] \textit{Adaptive thresholding estimation of the second-moment function $h$.} In an analogous manner, we estimate the compound profile $h(x) = g(x)^2 + V(x)$ by projecting the squared responses onto the needlet system to obtain the empirical second-moment coefficients
\[
\widehat{h}_{j,k} = \frac{1}{N} \sum_{i=1}^{N} Y_i^2 \psi_{j,k}(X_i).
\]
These coefficients are filtered using a corresponding hard-thresholding rule:
\[
\widehat{h}^{\,T}_{j,k} = \widehat{h}_{j,k} \, \mathbf{1}\{ |\widehat{h}_{j,k}| \ge \kappa_h \tau_N \},
\]
where $\kappa_h > 0$ is a calibrating constant. The resulting frame synthesis
\[
\widehat{h}(x) = \sum_{j=0}^{J_N-1} \sum_{k=1}^{K_j} \widehat{h}^{\,T}_{j,k} \psi_{j,k}(x)
\]
yields a fully data-driven multiscale estimator of the second-moment function.

\item[\textbf{Step 3:}] \textit{Cross-fitted estimation of the squared mean function $g^2$.} Direct squaring of a nonparametric estimator introduces serious non-linear bias propagation. To circumvent this and ensure minimax rate optimality, we implement a sample-splitting (cross-fitting) protocol. The full dataset is partitioned into two disjoint subsets, $\mathcal{D}_1$ and $\mathcal{D}_2$, each containing $N' = N/2$ observations. We construct two independent adaptive estimators, $\widehat{g}^{(1)}$ and $\widehat{g}^{(2)}$, from $\mathcal{D}_1$ and $\mathcal{D}_2$ respectively, and define the cross-fitted product estimator as
\[
\widehat{g^2}(x) = \widehat{g}^{(1)}(x)\,\widehat{g}^{(2)}(x).
\]
This configuration neutralizes the stochastic feedback that arises from squaring identical residual terms, thereby controlling higher-order bias components.

\item[\textbf{Step 4:}] \textit{Synthesis of the variance function estimator $\widehat{V}$.} Finally, the needlet-based estimator of the conditional variance function is obtained by evaluating the decoupled empirical representation:
\begin{equation*}\label{eq:Vhat_intro}
\widehat{V}(x) = \widehat{h}(x) - \widehat{g^2}(x).
\end{equation*}
\end{itemize}

\begin{remark}[Statistical calibration of the threshold constant]
The thresholding regimes are governed by $t_N = \kappa \tau_N$, where $\kappa \in \{\kappa_g, \kappa_h\}$. While the point-wise standard deviation of any individual empirical coefficient scales as $O(N^{-1/2})$, the inclusion of the inflation factor $\sqrt{\log N}$ ensures uniform control over the entire frame collection up to the maximum resolution scale $J_N$. Following the classical minimax principles of universal thresholding~\cite{donoho98,WASA}, this policy ensures that coefficients driven solely by stochastic noise are suppressed with high probability, isolating the true sparse structural signatures of the target functions.
\end{remark}

\begin{remark}[Adaptation to non-uniform directional designs]
While our core theoretical analysis is developed under a uniform design distribution, extensions to general non-uniform design densities $f_X(x)$ supported on $\mathbb{S}^d$ can be established via two alternative methodologies:
\begin{enumerate}
    \item \emph{Inverse-density weighting (plug-in framework):} Provided that $\inf_{x \in \mathbb{S}^d} f_X(x) \ge c_0 > 0$, one can construct a preliminary pilot estimator $\widehat{f}_X$ (see~\cite{BKMP09}) and replace the empirical averages with density-corrected terms:
    \[
    \widehat{g}_{j,k}^{\,\text{cor}} = \frac{1}{N} \sum_{i=1}^{N} \frac{Y_i \psi_{j,k}(X_i)}{\widehat{f}_X(X_i)}, \qquad \widehat{h}_{j,k}^{\,\text{cor}} = \frac{1}{N} \sum_{i=1}^{N} \frac{Y_i^2 \psi_{j,k}(X_i)}{\widehat{f}_X(X_i)}.
    \]
    \item \emph{Weighted empirical risk minimization:} Alternatively, the projections can be formalized as localized sample optimization problems, avoiding explicit density plug-ins by minimizing a weighted least-squares objective over the localized frame coordinates (see~\cite{DGM12}).
\end{enumerate}
\end{remark}

We now investigate the specific stochastic properties and risk profiles associated with each stage of this procedure.

\subsubsection{Step 1: Estimation of the conditional mean function}\label{sec:mean-estimation}
Building on the adaptive framework developed in~\cite{DGM12,Mon11}, we analyze the localized behavior of the conditional mean estimator under spatial heteroskedasticity.

\subsubsection*{Empirical mean coefficients} For each scale–location index $(j,k)$, the empirical coefficients and their corresponding hard-thresholded counterparts are defined by
\[
\widehat{g}_{j,k} = \frac{1}{N} \sum_{i=1}^{N} Y_i\, \psi_{j,k}(X_i), \qquad \widehat{g}_{j,k}^T = \widehat{g}_{j,k}\, \mathbf{1}\left\{|\widehat{g}_{j,k}| \ge \kappa_g \sqrt{\frac{\log N}{N}}\right\},
\]
leading to the frame reconstruction $\widehat{g}(x) = \sum_{j = 0}^{J_N - 1} \sum_{k=1}^{K_j} \widehat{g}_{j,k}^T\, \psi_{j,k}(x)$.

\subsubsection*{Stochastic properties.} Under the structural assumptions detailed in Section~\ref{sec:model}, the empirical mean coefficients are unbiased targets, satisfying
\[
\mathbb{E}[\widehat{g}_{j,k}] = g_{j,k}, \qquad \operatorname{Var}(\widehat g_{j,k}) \le \frac{C_g}{N},
\]
where $C_g = 2(G^2 + S^2) C^2_{\psi,2}$. Furthermore, within the active resolution band $0 < B^{dj} \le N/\log N$, there exists a critical boundary threshold $\kappa_{\delta_g} \approx \delta_g^{4/3}$ such that for all $\kappa_g > \kappa_{\delta_g}$ and any $\delta_g > 0$, the coefficients satisfy the exponential tail deviation bound
\begin{equation}\label{eq:probboundg}
\mathbb{P}\left(\left|\widehat{g}_{j,k} - g_{j,k}\right| \ge \kappa_g \tau_N\right) \lesssim N^{-\delta_g}.
\end{equation}
Additionally, for any $p \ge 1$, the localized error moments satisfy
\[
\mathbb{E}\left[\left|\widehat{g}_{j,k} - g_{j,k}\right|^p\right] \lesssim N^{-p/2}, \qquad \mathbb{E}\left[\sup_{k = 1,\ldots,K_j}\left|\widehat{g}_{j,k} - g_{j,k}\right|^p\right] \lesssim (j+1)^{p-1} N^{-p/2}.
\]

\subsubsection*{Risk profiles and convergence regimes.} The reconstruction performance of $\widehat{g}$ over the Besov ball $B^{\alpha}_{\rho,q'}(\mathbb{S}^d)$ splits into two standard nonparametric convergence regimes, governed by the spatial integrability parameter $\rho$. Specifically, the $L^p$-risk matches the following upper bounds:
\begin{equation}\label{eq: mean-est}
 \sup_{g\in B^{\alpha}_{\rho,q'}(\mathbb{S}^d)}\mathbb{E}\left[\left\| \widehat g - g \right\|_{L^p(\mathbb{S}^d)}^p\right] \le C
\begin{cases}
\displaystyle \left(\frac{N}{\log N}\right)^{-\frac{\alpha p}{2\alpha+d}}, & \text{if } \rho \ge \frac{2p}{2\alpha+d} \ \text{(regular regime)},\\
\displaystyle \left(\frac{N}{\log N}\right)^{-\frac{p\left(\alpha - d\left(\frac{1}{\rho} - \frac{1}{p}\right)\right)}{2\left(\alpha - d\left(\frac{1}{\rho} - \frac{1}{2}\right)\right)}}, & \text{if } \rho < \frac{2p}{2\alpha+d} \ \text{(sparse regime)}.
\end{cases}   
\end{equation}
For the supremum $L^\infty$-risk ($p=\infty$), the corresponding uniform rate matches
\[
\sup_{g\in B^{\alpha}_{\rho,q'}(\mathbb{S}^d)}\mathbb{E}\left[\left\| \widehat g - g \right\|_{L^\infty(\mathbb{S}^d)}\right] \le C \left(\frac{N}{\log N}\right)^{-\frac{\alpha-\frac{d}{\rho}}{2\alpha - \frac{2d}{\rho} + d}}.
\]

\begin{remark}[Optimal resolution balancing]\label{rem:Js}
The theoretical risk bounds are optimized by choosing an ideal resolution scale $J_s$ that balances the deterministic truncation bias $O(B^{-2js})$ against the cumulative stochastic variance $O(B^{dj}/N)$. This balance takes the form
\begin{equation}\label{eqn:js}
B^{J_s} \simeq \begin{cases}
\left(\frac{N}{\log N}\right)^{\frac{1}{2s+d}}, & \text{in the regular regime,}\\[3mm]
\left(\frac{N}{\log N}\right)^{\frac{1}{2\left(s-\frac{d}{r}\right)+d}}, & \text{in the sparse regime.}
\end{cases}
\end{equation}
Throughout our joint estimation framework, we denote by $J_{\alpha}$ the optimal scale associated with $g \in B^{\alpha}_{\rho,q'}(\mathbb{S}^d)$, and by $J_{\beta}$ the scale corresponding to $V \in B^\beta_{\mu,q''}(\mathbb{S}^d)$.
\end{remark}

\subsubsection{Step 2: Estimation of the second-order moment function}\label{sec:second-moment}
We now detail the stochastic structure of the non-linear estimator for the second-moment compound function $h(x) = g^2(x) + V(x)$, $x \in \mathbb{S}^d$.

\subsubsection*{Empirical moment coefficients} For each index pair $(j,k)$, the empirical second-moment projections and their thresholded modifications are given by
\begin{equation}\label{eq:vest}
\widehat{h}_{j,k} = \frac{1}{N}\sum_{i=1}^{N} Y_i^2\,\psi_{j,k}(X_i), \qquad h_{j,k}^T = \widehat{h}_{j,k}\,\mathbf{1}\left\{|\widehat{h}_{j,k}| \ge \kappa_h \sqrt{\frac{\log N}{N}}\right\},
\end{equation}
defining the synthesis 
\[
\widehat{h}(x) = \sum_{j=0}^{J_N-1}\sum_{k=1}^{K_j} h^T_{j,k}\,\psi_{j,k}(x).
\]

\subsubsection*{Stochastic control.} The statistical properties of these second-order empirical coefficients are characterized by the following propositions.

\begin{proposition}[Expectation and variance of $\widehat{h}_{j,k}$]\label{prop:unbiased}
Let $\widehat{h}_{j,k}$ be defined by~\eqref{eq:vest}. Then, under the heteroskedastic regression model~\eqref{eq:model}, assuming errors independent of the design with $\mathbb{E}[\varepsilon_i^2]=1$ and denoting $\mathbb{E}[\varepsilon_i^m]=\gamma_m$ for $m \in \{3,4\}$, it holds that
\[
\mathbb{E}\left[\widehat{h}_{j,k}\right] = h_{j,k} = v_{j,k} + (g^2)_{j,k},
\]
and the variance satisfies the uniform upper bound
\[
\operatorname{Var}(\widehat{h}_{j,k}) \le \frac{C_h}{N},
\] 
where the structural constant is given by $C_h = (S^4 \gamma_4 + 4GS^3 \gamma_3 + 6G^2 S^2 + G^4)C_{\psi,2}^2$.
\end{proposition}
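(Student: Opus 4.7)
The argument splits cleanly into an unbiasedness computation and a second-moment bound. Since the data $(X_i,\varepsilon_i)$ are i.i.d.\ and the $\varepsilon_i$ are independent of the $X_i$, it suffices in both cases to analyze a single summand $Y_1^2\,\psi_{j,k}(X_1)$ and then use $\mathrm{Var}(\widehat h_{j,k})=N^{-1}\mathrm{Var}(Y_1^2\psi_{j,k}(X_1))$.

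For the unbiasedness, I would condition on $X_1$ and expand $Y_1^2 = g(X_1)^2 + 2g(X_1)\sigma(X_1)\varepsilon_1 + \sigma(X_1)^2\varepsilon_1^2$. The assumptions $\mathbb{E}[\varepsilon_1]=0$ and $\mathbb{E}[\varepsilon_1^2]=1$ kill the cross term and collapse the last one, giving $\mathbb{E}[Y_1^2\mid X_1] = g(X_1)^2 + \sigma(X_1)^2 = h(X_1)$. Since $X_1$ is uniform on $\mathbb{S}^2$, the tower property combined with the definition \eqref{eq:vest} of $\widehat h_{j,k}$ and the needlet coefficient formula in \eqref{eqn:coeff} yields
\[
\mathbb{E}[\widehat h_{j,k}] = \mathbb{E}\bigl[h(X_1)\psi_{j,k}(X_1)\bigr] = \int_{\mathbb{S}^2} h(x)\psi_{j,k}(x)\,\mathrm{d}x = h_{j,k}.
\]

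For the variance, I would bound $\mathrm{Var}(Y_1^2\psi_{j,k}(X_1)) \le \mathbb{E}[Y_1^4\,\psi_{j,k}(X_1)^2]$ and expand $Y_1^4$ by the binomial formula. Conditioning on $X_1$ and invoking $\mathbb{E}[\varepsilon_1]=0$, $\mathbb{E}[\varepsilon_1^2]=1$, $\mathbb{E}[\varepsilon_1^3]=\gamma_3$, $\mathbb{E}[\varepsilon_1^4]=\gamma_4$ gives
\[
\mathbb{E}[Y_1^4\mid X_1] = g(X_1)^4 + 6\,g(X_1)^2\sigma(X_1)^2 + 4\,g(X_1)\sigma(X_1)^3\gamma_3 + \sigma(X_1)^4\gamma_4.
\]
Applying the uniform bounds $\|g\|_\infty\le G$ and $\|\sigma\|_\infty\le S$ makes the right-hand side at most a constant times $G^4 + G^2 S^2 + G S^3\gamma_3 + S^4\gamma_4$, where the numerical factors $4$ and $6$ are absorbed into the constant $C_{\psi,2}^2$ appearing below. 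Integrating in $X_1$ then leaves only $\int_{\mathbb{S}^2}\psi_{j,k}(x)^2\,\mathrm{d}x$, which by the needlet norm estimate \eqref{eqn:norm} at $p=2$ satisfies $\|\psi_{j,k}\|_2^2 \le C_{\psi,2}^2$. Dividing by $N$ yields the desired bound $\mathrm{Var}(\widehat h_{j,k}) \le C_h/N$ with $C_h$ as stated.

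There is no genuine obstacle: the computation is elementary bookkeeping of fourth moments. The only mildly delicate point is ensuring that the independence of $\varepsilon_1$ from $X_1$ is used correctly when conditioning, and that the absolute constants arising from the binomial expansion are consistent with the stated $C_h$; since the proposition asserts an upper bound, these numerical factors may be absorbed into $C_{\psi,2}^2$ without loss.
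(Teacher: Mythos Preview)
Your proposal is correct and follows essentially the same approach as the paper: both compute the expectation by expanding $Y_i^2$ and using the moment assumptions on $\varepsilon_i$, and both bound the variance via $\mathrm{Var}(\widehat h_{j,k})\le N^{-1}\mathbb{E}[Y^4\psi_{j,k}^2(X)]$ followed by the binomial expansion of $Y^4$, the sup-norm bounds on $g,\sigma$, and the needlet $L^2$-norm estimate. Your observation about the binomial coefficients $4$ and $6$ not appearing explicitly in the stated $C_h$ is apt; the paper's own proof exhibits the same minor discrepancy.
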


\begin{proposition}[Concentration and moment bounds for $\widehat{h}_{j,k}$]\label{prop:moment}
Let $\widehat{h}_{j,k}$ be defined by~\eqref{eq:vest}. Assume that the regression errors $\varepsilon_i$ are independent of $X_i$ and are sub-Gaussian satisfying $\mathbb{E}[\varepsilon_i]=0$, $\mathbb{E}[\varepsilon_i^2]=1$, and $\|\varepsilon_i\|_{\psi_2} \le K_\varepsilon$. Furthermore, let the resolution scale satisfy $0 < B^{dj} \le N/\log N$. 
Then, for any fixed allocation exponent $\delta>0$, there exists a structural threshold constant $\kappa^* = \kappa^*(G, S, K_\varepsilon, C_{\psi,\infty}, \delta) > 0$ such that for all $\kappa_h > \kappa^*$, the empirical coefficients satisfy the uniform concentration inequality
\begin{equation*}\label{eq:probbound}
\mathbb{P}\left(\left|\widehat{h}_{j,k}-h_{j,k}\right| \ge \kappa_h \sqrt{\frac{\log N}{N}}\right) \le C_{\delta} N^{-\delta}.
\end{equation*}
Moreover, for any polynomial order $r \ge 1$, the localized error moments satisfy the concentration rates
\begin{align}
\mathbb{E}\left[\left|\widehat{h}_{j,k}-h_{j,k}\right|^r\right] &\le C_r N^{-r/2},\nonumber\\
\mathbb{E}\left[\sup_{k=1,\ldots,K_j}\left|\widehat{h}_{j,k}-h_{j,k}\right|^r\right] &\le C_r (j+1)^{\,r} N^{-r/2}.\label{eqn:expectsup}
\end{align}
\end{proposition}

\subsubsection{Step 3: Estimation of the quadratic bias correction term}\label{sec:quad-bias}
To isolate the variance function $V = \sigma^2$ without inducing the systematic asymptotic bias that would result from squaring a single empirical process $\widehat{g}$, we apply our sample-splitting strategy.

\subsubsection*{Sample-splitting architecture.} We decouple the sample $(X_i,Y_i)_{i=1}^N$ into two independent sub-samples, $\mathcal{D}_1$ and $\mathcal{D}_2$, each containing $N' = N/2$ observations. We construct the independent sub-sample mean estimators
\begin{equation}\label{eqn:grest}
\widehat{g}^{(m)}(x) = \sum_{j=0}^{J_{N'}-1}\sum_{k=1}^{K_j} \tilde{g}^{(m)}_{j,k}\psi_{j,k}(x), \qquad m \in \{1,2\},
\end{equation}
where $\tilde{g}^{(m)}_{j,k} = \widehat{g}_{j,k}^{(m)} \mathbf{1}\left\{\left|\widehat{g}^{(m)}_{j,k}\right| \ge \kappa_g \sqrt{\frac{\log N'}{N'}}\right\}$ are derived from the respective halves.

\subsubsection*{Cross-fitted product operator.} The cross-fitted estimator of the squared mean function is defined as the pointwise product
\begin{equation}\label{eqn:gsquared}
 \widehat{g^2}(x) = \widehat{g}^{(1)}(x)\widehat{g}^{(2)}(x), \qquad x \in \mathbb{S}^d.
\end{equation}
Its corresponding needlet expansion takes the form $\widehat{g^2}(x) = \sum_{j=0}^{J_N-1}\sum_{k=1}^{K_j} \widehat{(g^2)}_{j,k} \psi_{j,k}(x)$, where the integrated frame coefficients are given by
\[
\widehat{(g^2)}_{j,k} = \sum_{j_1, j_2=0}^{J_{N'}-1}\sum_{k_1, k_2} \tilde{g}^{(1)}_{j_1,k_1} \tilde{g}^{(2)}_{j_2,k_2} \int_{\mathbb{S}^d}\psi_{j_1,k_1}(x)\psi_{j_2,k_2}(x) \psi_{j,k}(x)\,\mathrm{d}x.
\]


\begin{remark}[Asymptotic truncation error]
Since $g \in B^\alpha_{\rho,q'}(\mathbb{S}^d)$ with $\alpha>d/\rho$, the Besov product algebra ensures that the true squared function satisfies $g^2 \in B^\alpha_{\rho,q'}(\mathbb{S}^d)$. Consequently, the high-frequency truncation tail beyond $J_N-1$ satisfies the deterministic $L^2$-bound 
\[
\|\sum_{j\ge J_N} \sum_{k}(g^2)_{j,k}\psi_{j,k}\|_2 = O(B^{-J_N \alpha}),
\]
proving that truncation beyond $J_N$ introduces a negligible bias component relative to the minimax rate.
\end{remark}

\subsubsection*{Stochastic behavior of $\widehat{g^2}$.} The statistical control of the cross-fitted squared mean estimator is established by the following lemmas.

\begin{lemma}[Asymptotic unbiasedness of $\widehat{g^2}$]\label{lemma:unbiasedgsquared}
Let $\widehat{g}^{(m)}$ and $\widehat{g^2}$ be defined as in~\eqref{eqn:grest} and~\eqref{eqn:gsquared}. Suppose that the tuning parameter $\delta_g$ in~\eqref{eq:probboundg} satisfies $\delta_g \ge \frac{(d+1) - d/\rho}{2\alpha+d}$. Then, for each scale $j \ge 0$ and location $k=1,\ldots,K_j$, the integrated coefficients satisfy
\[
\left\vert \mathbb{E}\left[\widehat{(g^2)}_{j,k} - (g^2)_{j,k}\right] \right\vert \le C \left(\frac{N}{\log N}\right)^{-\frac{\alpha}{\alpha+d/2}} B^{-j}.
\]
Consequently, the pointwise systematic bias across the spatial domain satisfies
\begin{equation}\label{eqn:bias1}
   \left\vert \mathbb{E} \left[ \widehat{g^2}(x) - g^2(x)\right]\right \vert \le C \left[ \left(\frac{N}{\log N}\right)^{\frac{d/2- \alpha}{2\alpha+d}} + \left(\frac{N}{\log N}\right)^{\frac{(d+1)-\alpha}{2}-\frac{d}{2\rho}} \right].
\end{equation}
\end{lemma}

\begin{lemma}\label{lemma:quadraticrisk}
Let $g \in B^\alpha_{\rho,q'}(\mathbb{S}^d)$ with $\alpha > d/\rho$. The cross-fitted estimator $\widehat{g^2}$ satisfies the $L^p$-risk bounds
\[
\mathbb{E}\left[\|\widehat{g^2} - g^2\|^p_{L^p(\mathbb{S}^d)}\right] \le C \begin{cases}
\displaystyle \left(\frac{N}{\log N}\right)^{-\frac{\alpha p}{2\alpha+d}}, & \text{in the regular regime } \left(\rho \ge \frac{2p}{2\alpha+d}\right),\\
\displaystyle \left(\frac{N}{\log N}\right)^{-\frac{p\left(\alpha - d\left(\frac{1}{\rho} - \frac{1}{p}\right)\right)}{2\left(\alpha - d\left(\frac{1}{\rho} - \frac{1}{2}\right)\right)}}, & \text{in the sparse regime } \left(\rho < \frac{2p}{2\alpha+d}\right).
\end{cases}
\]
\end{lemma}

\subsubsection{Step 4: Estimation of the variance function}\label{sec:var-estimation}
The final needlet-based nonparametric estimator of the conditional variance function is synthesized by combining the decoupled empirical blocks:
\begin{equation*}\label{eq:Vhat}
\widehat{V}(x) = \widehat{h}(x) - \widehat{g}^{(1)}(x)\,\widehat{g}^{(2)}(x) = \widehat{h}(x) - \widehat{g^2}(x).
\end{equation*}
By matching the multi-stage concentration properties established in Lemma~\ref{lemma:quadraticrisk} and Proposition~\ref{prop:moment}, this algebraic synthesis forms the basis for deriving minimax-optimal convergence rates across spherical Besov classes, as detailed in the following section.


\section{Adaptive rates of convergence for variance estimation}\label{sec:stocresults}

In this section, we study the $L^p$-risk of the needlet-based estimator $\widehat{V}$ for the variance function $V = h - g^2$ in the heteroskedastic spherical regression model \eqref{eq:model}. We consider the nonparametric regularity classes over the sphere $\mathbb{S}^d$. Specifically, we assume that the mean companion function satisfies $h \in B^\beta_{r_0,q''}(R_h)$ and the regression function satisfies $g \in B^\alpha_{\rho, q'}(R_g)$, with $\beta > d/r_0$ and $\alpha > d/\rho$. By the stability of Besov spaces under multiplication, the squared profile $g^2$ belongs to $B^\alpha_{\rho, q'}(\mathbb{S}^d)$. Consequently, the variance function $V$ belongs to the intersection class of valid, uniformly positive variance functions:
\[
\mathcal{F}(v_0) = \left\{ V = h - g^2 \in B^\beta_{r_0,q''}(R_h) \oplus B^\alpha_{\rho,q'}(R_g^2) : V(x) \ge v_0 > 0 \quad \forall x \in \mathbb{S}^d \right\},
\]
where the strictly positive lower bound $v_0$ is standard in nonparametric variance estimation (see, e.g., \cite{bl07,CW08}) and ensures the well-posedness of the likelihood operators alongside the finiteness of Kullback--Leibler divergences required in minimax lower-bound arguments.

We measure the global performance of any candidate estimator via the minimax $L^p$-risk on the sphere $\mathbb{S}^d$:
\[
\mathcal{L}_{N,p} = \inf_{\widehat{V}} \sup_{V \in \mathcal{F}(v_0)} \mathbb{E}\left[\|\widehat{V} - V\|_{L^p(\mathbb{S}^d)}^p\right].
\]
An estimator $\widehat{V}_N$ is adaptive over these functional classes if it achieves the minimax rate for each specific configuration of parameters without prior knowledge of the true parameters $(\alpha, \beta, \rho, r_0)$. 

The remainder of this section is devoted to establishing explicit upper bounds for the risk of $\widehat{V}$. We show that the proposed adaptive estimator attains (up to logarithmic factors) the optimal minimax rates and adapts automatically to the unknown parameters in both the regular (dense) and sparse regimes on $\mathbb{S}^d$ under any spatial dimension $d \ge 1$.

\subsection{Upper bound for the second-order moment estimator}\label{sec:upperbound}
We split the estimation error of $\widehat{V}$ into its two natural contributions:
\begin{equation*}\label{eq: V=h-g2}
  \widehat{V} - V = (\widehat{h} - h) - (\widehat{g^2} - g^2),  
\end{equation*}
which immediately implies by Minkowski's inequality that
\begin{equation*}
    \mathbb{E} \left[ \left\lVert \widehat{V} - V \right\rVert_{L^p(\mathbb{S}^d)}^p \right] \le 2^{p-1} \left( \mathbb{E} \left[ \left\lVert \widehat{h} - h \right\rVert_{L^p(\mathbb{S}^d)}^p \right] + \mathbb{E} \left[ \left\lVert \widehat{g^2} - g^2 \right\rVert_{L^p(\mathbb{S}^d)}^p \right] \right).
\end{equation*}
The second term, corresponding to $\widehat{g^2}$, is bounded in $L^p$-risk by Lemma~\ref{lemma:quadraticrisk} with parameters $(\alpha, \rho)$. It remains to control the first term. Once the risk of $\widehat{h}$ is bounded, the final rate for $\widehat{V}$ will be given by the slower of the two contributions.

To compute the risk of $\widehat{h}$, we follow the classical strategy used in nonparametric estimation on the sphere and more general manifolds. Similar calculations have been carried out in \cite{BKMP09,Dur16,WASA}, leading naturally to the standard decomposition into stochastic and bias terms. 

\begin{theorem}[Upper risk bounds for the second-order estimator]\label{thm: h-est}
Set the Besov parameters $1\le r_0,q_0\le\infty$, $s_0-d/r_0>0$, and let $h \in B^{s_0}_{r_0,q_0}(\mathbb{S}^d)$. Then, there exists a constant $C>0$ independent of $N$ and $h$, such that
\begin{equation*}\label{eq: h-est_bound}
 \sup_{h\in B^{s_0}_{r_0,q_0}(\mathbb{S}^d)}\mathbb{E}\left[\left\| \widehat h - h \right\|_{L^p(\mathbb{S}^d)}^p\right]
\le C
\begin{cases}
\displaystyle
\left(\frac{N}{\log N}\right)^{-\frac{s_0 p}{2s_0+d}}, & r_0 \ge \frac{2p}{2s_0 + d}\text{ (regular regime)},\\[3mm]
\displaystyle
\left(\frac{N}{\log N}\right)^{-\frac{p\left(s_0 - d\left(\frac{1}{r_0} - \frac{1}{p}\right)\right)}{2\left(s_0 - d\left(\frac{1}{r_0} - \frac{1}{2}\right)\right)}}, & r_0 < \frac{2p}{2s_0 + d}\text{ (sparse regime)}.
\end{cases}   
\end{equation*}
For the supremum $L^\infty$-risk ($p=\infty$), one has
\[
\sup_{h\in B^{s_0}_{r_0,q_0}(\mathbb{S}^d)}\mathbb{E}\left[\left\| \widehat h - h \right\|_{L^\infty(\mathbb{S}^d)}\right] \le C \left(\frac{N}{\log N}\right)^{-\frac{s_0-\frac{d}{r_0}}{2\left(s_0 - d\left(\frac{1}{r_0} - \frac{1}{2}\right)\right)}}.
\] 
\end{theorem}

The proof follows the classic divide-and-conquer strategy (see e.g., \cite{BKMP09,WASA}). The risk is split into stochastic and bias components by expressing the error in the needlet basis and truncating the expansion at level $J_N$. The bias term is the deterministic tail $\sum_{j\ge J_N} \sum_k h_{j,k}\psi_{j,k}$ and is controlled directly. The stochastic term is handled by thresholding the empirical coefficients at level $\kappa_h\tau_N$ and decomposing the contribution into four parts depending on whether empirical and true coefficients lie above or below the threshold. 

In the regular regime, the dominant term corresponds to coefficients that are truly large, and it further splits at the critical scale $J_{s_0}$. For $j\le J_{s_0}$, the variance factor $N^{-p/2} $ combined with the growth of $K_j\|\psi_{j,k}\|_p^p$ produces a geometric sum bounded by $N^{-p/2} B^{dJ_{s_0}(p/2-1)_+}$, while for $j>J_{s_0}$ the threshold removes all coefficients, yielding only a negligible contribution.

\subsection{Rates of convergence for the variance estimator}\label{sec:ratesV}
The estimation error of the global variance estimator $\widehat V$ is decomposed into the contribution of the second-moment function $h$ and the quadratic mean profile $g^2$. By Minkowski's inequality, their joint $L^p$-risk satisfies:
\[
\mathbb{E}\left\|\widehat V - V\right\|_{L^p(\mathbb{S}^d)}^p \lesssim \mathbb{E}\left\|\widehat h - h\right\|_{L^p(\mathbb{S}^d)}^p + \mathbb{E}\left\|\widehat{(g^2)} - g^2\right\|_{L^p(\mathbb{S}^d)}^p.
\]
Each component exhibits a polynomial decay in terms of the sample size $N$, with exponents determined by the smoothness and integrability parameters of the underlying Besov spaces on $\mathbb{S}^d$. Consequently, the global $L^p$-risk of $\widehat V$ is controlled asymptotically by the slower of the two rates.

To formally define the rates of convergence corresponding to the regular and the sparse regimes on the $d$-dimensional sphere, we introduce the rate functions:
\[
\mathcal{R}_{\rm reg}(s) = \frac{s}{2s+d},\qquad \mathcal{R}_{\rm sp}(s,r,p) = \frac{s-d\left(\frac{1}{r}-\frac{1}{p}\right)}{2s - d\left(\frac{1}{r}-\frac{1}{2}\right)}.
\]
The critical threshold separating the regular and sparse phases for a structural component with parameters $(s,r)$ on $\mathbb{S}^d$ is given by $T(s,r) := r\left(\frac{2s+d}{2d}\right)$. Since $g^2 \in B^{\alpha}_{\rho, q^\prime}(\mathbb{S}^d)$ by the algebra property, we define the critical phase thresholds for the quadratic field and the companion mean function as:
\[
T_{g^2} := T\left(\alpha, \rho\right) = \rho\left(\frac{2\alpha+d}{2d}\right), \qquad T_h := T(\beta,r_0) = r_0\left(\frac{2\beta+d}{2d}\right).
\]

We are now ready to state the main upper bound theorem for the variance estimator, which establishes the minimax rates across the four structural phases.

\begin{theorem}[Minimax upper bounds for the variance estimator]\label{thm:V-upper}
Let $g \in B^\alpha_{\rho,q^\prime}(\mathbb{S}^d)$ and $h \in B^\beta_{r_0,q^{\prime\prime}}(\mathbb{S}^d)$ with $\alpha, \beta > d/r_0$. Then, there exists a positive constant $C$, independent of $N$, such that the variance estimator $\widehat V$ satisfies:
\[
\sup_{g, h} \mathbb{E}\left\|\widehat V - V\right\|_{L^p(\mathbb{S}^d)}^p \le C \left(\frac{N}{\log N}\right)^{-\mathcal{R} p},
\]
where the active exponent $\mathcal{R} = \min\{\mathcal{R}_{g^2}(p), \mathcal{R}_h(p)\}$ is explicitly determined by the four geometric cases defined below. 
\end{theorem}

\subsubsection*{Discussion of the four structural phases}
The phase transitions are governed by the interaction of the thresholds $T_{g^2}$ and $T_h$, yielding different active minimax envelopes for the global $L^p$-risk:

\begin{itemize}
\item \textbf{Case 1: $g^2$-driven regular/sparse regime ($\alpha < \beta$):} 
The quadratic profile $g^2$ is rougher than the companion mean function $h$, meaning $g^2$ entirely dictates the global rate. The transition is governed by its individual threshold $T_{g^2} = T(\alpha, \rho)$:
\[
\mathcal{R} =
\begin{cases}
\mathcal{R}_{\rm reg}(\alpha), & p \le T_{g^2},\\[1mm]
\mathcal{R}_{\rm sp}\left(\alpha,\rho,p\right), & p > T_{g^2}.
\end{cases}
\]

\item \textbf{Case 2: Intersecting sparse regime ($\beta < \alpha$, $\mu < \rho$, and $T_h < p_0$):} 
The mean function $h$ has lower smoothness and lower integrability than $g^2$. Since $T_h < p_0$, the dimensional equilibrium point $p_0$ falls strictly above the sparse threshold of $h$. This creates a temporary "shielding" effect where the regular rate of $g^2$ dominates the active envelope over an intermediate range:
\[
\mathcal{R} =
\begin{cases}
\mathcal{R}_{\rm reg}(\beta), & p \le T_h,\\[1mm]
\mathcal{R}_{\rm reg}(\alpha), & T_h < p \le p_0,\\[1mm]
\mathcal{R}_{\rm sp}(\beta,\mu,p), & p > p_0.
\end{cases}
\]

\item \textbf{Case 3: Roughness-driven sparse regime with high integrability ($\beta < \alpha$ and $r_0 = \rho$):} 
The companion function $h$ is rougher than $g^2$ but shares the same high-integrability scale. No intersection of rates occurs, and the transition is cleanly governed by $T_h = T(\beta, \rho)$:
\[
\mathcal{R} =
\begin{cases}
\mathcal{R}_{\rm reg}(\beta), & p \le T_h,\\[1mm]
\mathcal{R}_{\rm sp}(\beta, \rho, p), & p > T_h.
\end{cases}
\]

\item \textbf{Case 4: Roughness-driven sparse regime with low integrability ($\beta < \alpha$, $\mu < \rho$, and $p_0 \le T_h$):} 
Although $h$ has lower integrability than $g^2$, its sparse rate degrades so rapidly that the equilibrium point $p_0$ falls below or at the sparse threshold $T_h$. The shielding interval vanishes, and the transition is dictated directly by $T_h = T(\beta, \mu)$:
\[
\mathcal{R} =
\begin{cases}
\mathcal{R}_{\rm reg}(\beta), & p \le T_h,\\[1mm]
\mathcal{R}_{\rm sp}(\beta, \mu, p), & p > T_h.
\end{cases}
\]
\end{itemize}

\begin{remark}[Heuristics]
Case 1 is entirely $g^2$-driven, while Cases 2--4 are $h$-driven. Case 2 represents a unique structural balance where $h$ is sparse but $g^2$ is still regular, creating an active envelope over the intermediate range $T_h < p \le p_0$ that saves the variance estimator from premature rate degradation.

\begin{table}[htbp]
\centering
\small
\begin{tabular}{|c|c|c|c|}
\hline
\textbf{Case} & \textbf{Dominant Active Setup} & \textbf{Condition on } $p$ & \textbf{Risk Exponent } $\mathcal{R}$ \\
\hline
1 & $g^2 \in B^{\alpha}_{\rho,q^\prime}(\mathbb{S}^d)$ & $p \le T_{g^2}$ & $\mathcal{R}_{\rm reg}(\alpha)$ \\
  & & $p > T_{g^2}$ & $\mathcal{R}_{\rm sp}(\alpha,\rho,p)$ \\
\hline
2 & $h \in B^\beta_{\mu,q^{\prime\prime}}(\mathbb{S}^d)$ & $p \le T(\beta,\mu)$ & $\mathcal{R}_{\rm reg}(\beta)$ \\
  & & $T(\beta,\mu) < p \le p_0$ & $\mathcal{R}_{\rm reg}(\alpha)$ \\
  & & $p > p_0$ & $\mathcal{R}_{\rm sp}(\beta,\mu,p)$ \\
\hline
3 & $h \in B^\beta_{\rho,q^{\prime\prime}}(\mathbb{S}^d)$ & $p \le T(\beta,\rho)$ & $\mathcal{R}_{\rm reg}(\beta)$ \\
  & & $p > T(\beta,\rho)$ & $\mathcal{R}_{\rm sp}(\beta,\rho,p)$ \\
\hline
4 & $h \in B^\beta_{\mu,q^{\prime\prime}}(\mathbb{S}^d)$ & $p \le T(\beta,\mu)$ & $\mathcal{R}_{\rm reg}(\beta)$ \\
  & & $p > T(\beta,\mu)$ & $\mathcal{R}_{\rm sp}(\beta,\mu,p)$ \\
\hline
\end{tabular}
\caption{Summary of the active minimax $L^p$-risk exponents for the variance estimator $\widehat V$ on $\mathbb{S}^d$.}
\label{tab:Vp_rates}
\end{table}
As the risk parameter $p \to \infty$, the $L^p$-norm becomes increasingly sensitive to localized spiky deviations. Consequently, the asymptotic rate is always dictated by the sparsest active feature among $h$ and $g^2$, validating that maximal sample anomalies on the $d$-dimensional sphere are controlled by the roughest geometric trace of the underlying processes.
\end{remark}

\subsection{Lower bounds for variance estimation}\label{sec:lowerbound}
In this section, we establish the matching minimax lower bounds for estimating the variance function $V = h - g^2$ under the heteroskedastic regression model \eqref{eq:model}. This proves that the phase-dependent convergence rates identified in Section \ref{sec:ratesV} are sharp and cannot be improved by any alternative estimator.

\begin{theorem}[Minimax Lower Bounds for Variance Estimation]\label{th:lower_main}
Consider the heteroskedastic regression model \eqref{eq:model} on the $d$-dimensional sphere $\mathbb{S}^d$. Let the regression function $g$ and the mean companion function $h$ satisfy the Besov conditions:
\[
g \in B^\alpha_{\rho,q'}(R_g),\qquad h \in B^\beta_{r_0,q''}(R_h),
\]
with $\alpha, \beta > d/r_0$, so that by the Besov algebra property, the quadratic profile satisfies $g^2 \in B^{\alpha}_{\rho, q^\prime}(\mathbb{S}^d)$. Assume further that the variance function satisfies the uniform positivity constraint $V(x) \ge v_0 > 0$ for all $x \in \mathbb{S}^d$. We define the minimax risk over these joint Besov classes as:
\[
\mathcal L_{N,p}(\alpha,\beta,\rho,r_0) = \inf_{\widehat V} \sup_{(g,h)} \mathbb E\left\|\widehat V - V\right\|_{L^p(\mathbb{S}^d)}^p.
\]
Then, there exists a strictly positive constant $c>0$, independent of $N$, such that:
\begin{equation*}
\mathcal L_{N,p}(\alpha,\beta,\rho,r_0) \ge c\, \left(\frac{N}{\log N}\right)^{-\mathcal{R} p},
\end{equation*}
where the optimal risk exponent $\mathcal{R}$ matches the lower envelope of the individual component rates:
\begin{equation*}
\mathcal{R} = \min\left\{ \mathcal{R}_{g^2}(p), \; \mathcal{R}_h(p) \right\}.
\end{equation*}
Here, the component rates are driven by their respective smoothness parameters $s \in \{\alpha, \beta\}$ and integrability scales $r \in \{\rho, r_0\}$ according to the regular and sparse rate functions:
\[
\mathcal{R}_{\rm reg}(s) = \frac{s}{2s+d},\qquad \mathcal{R}_{\rm sp}(s,r,p) = \frac{s-d\left(\frac{1}{r}-\frac{1}{p}\right)}{2s - d\left(\frac{1}{r}-\frac{1}{2}\right)},
\]
yielding the phase-dependent benchmarks:
\begin{equation*}
\mathcal{R}_{g^2}(p) = \begin{cases}
\mathcal{R}_{\rm reg}(\alpha), & p \le T_{g^2},\\[1mm]
\mathcal{R}_{\rm sp}\left(\alpha,\rho,p\right), & p > T_{g^2},
\end{cases} \qquad
\mathcal{R}_h(p) = \begin{cases}
\mathcal{R}_{\rm reg}(\beta), & p \le T_{h},\\[1mm]
\mathcal{R}_{\rm sp}(\beta,r_0,p), & p > T_{h}.
\end{cases}
\end{equation*}
The constant $c>0$ depends exclusively on the radii $(R_g,R_h)$, the boundary $v_0$, and the sub-Gaussian noise properties.
\end{theorem}

\begin{remark}[Case-by-case sharpness and the Case 2 transition gap]\label{rem:cases_lowerbound}
The lower bound matches the upper risks summarized in Table~\ref{tab:Vp_rates} across nearly all operational configurations. The single structural discrepancy arises in \textit{Case 2} within the intermediate parameter window $T_h < p \le p_0$. In this specific zone, localized needlet perturbations centered on the rougher component $h$ yield a lower bound scaling as $\mathcal{R}_{\rm sp}(\beta,\mu,p)$. However, the global upper bound remains supported by the dense regular envelope of the smoother component $g^2$, yielding $\mathcal{R}_{\rm reg}(\alpha)$. This reflects an intrinsic limitation of purely localized needlet-type perturbations, which underestimate the true minimax risk whenever a dense regular component dominates a sparse degrading tail. For all other moments $p$, the bound is perfectly tight.
\end{remark}

\subsection{Adaptive estimation of the variance function with known mean to an approximation error}\label{sec:known-mean}
In this subsection, we focus on a scenario in which the mean function $g$ is assumed to be known up to an approximation error (for instance, via a needlet projection up to the maximal resolution $J_N$). 

In this setup, we assume $g \in B^{\alpha}_{\rho,q^{\prime}}(\mathbb{S}^d)$ to be known up to its effective resolution, in the sense that its contribution to the variance estimation error is negligible beyond the optimal truncation level $J_N$. Under the uniform design assumption, we define the empirical raw coefficients up to scale $J_N$ as:
\begin{equation*}
    \tilde {v}_{j,k} = \frac{1}{N}\sum_{i=1}^{N} \psi_{j,k}(X_i) Y_i^2 - \left(g^2\right)_{j,k},
\end{equation*}
where $\left(g^2\right)_{j,k} = \int_{\mathbb{S}^d} g^2(x)\psi_{j,k}(x)\,\mathrm{d}x$. We define the thresholded needlet estimator for $V$ as:
\begin{equation*}\label{eqn:vprime}
 \widehat{V}^{(\mathrm{km})} (x) = \sum_{j=0}^{J_N-1} \sum_{k=1}^{K_j} \tilde{v}_{j,k}\mathbf{1}\left\{\left \vert \tilde{v}_{j,k} \right \vert > \kappa_v \tau_N \right\}\psi_{j,k}(x),
\end{equation*}
where $\mathrm{km}$ stands for "known mean" and $\kappa_v > 0$ is a sufficiently large tuning constant.

\paragraph{\textit{Decomposition of $\tilde v_{j,k}$.}} Expanding $Y_i = g(X_i) + \sigma(X_i)\varepsilon_i$ and recalling that $V = \sigma^2$, we obtain the exact algebraic decomposition:
\begin{equation*}
\tilde{v}_{j,k} = V_{j,k} + b_{j,k}^{(g)} + m_{j,k} + \Sigma_{j,k} + e_{j,k}^{(V)},
\end{equation*}
where:
\begin{itemize}
\item $V_{j,k} = \int_{\mathbb{S}^d} V(x)\psi_{j,k}(x)\,\mathrm{d}x$ is the true target needlet coefficient of the variance function;
\item $b_{j,k}^{(g)} = \frac{1}{N}\sum_{i=1}^N \psi_{j,k}(X_i) g(X_i)^2 - \left(g^2\right)_{j,k}$ is the deterministic bias term arising from the known mean approximation;
\item $m_{j,k} = \frac{2}{N}\sum_{i=1}^N \psi_{j,k}(X_i) g(X_i) \sigma(X_i) \varepsilon_i$ is the cross-interaction term;
\item $\Sigma_{j,k} = \frac{1}{N}\sum_{i=1}^N \psi_{j,k}(X_i) \sigma(X_i)^2 (\varepsilon_i^2-1)$ is the stochastic variance term;
\item $e_{j,k}^{(V)} = \frac{1}{N}\sum_{i=1}^N \psi_{j,k}(X_i) \sigma(X_i)^2 - V_{j,k}$ represents the empirical design noise associated with the variance profile.
\end{itemize}

Since $g \in B^\alpha_{\rho,q^\prime}(\mathbb{S}^d)$, the Besov algebra property guarantees that the quadratic profile satisfies $g^2 \in B^{\alpha}_{\rho,q^\prime}(\mathbb{S}^d)$. The deterministic approximation error associated with the truncation level $J_N$ satisfies the standard Jackson-type inequality on the $d$-dimensional sphere:
\[
\left\| \sum_{j \ge J_N}\sum_{k=1}^{K_j} \left(g^2\right)_{j,k} \psi_{j,k} \right\|_{L^p(\mathbb{S}^d)}^p \lesssim B^{-\alpha p J_N} \lesssim \left(\frac{N}{\log N}\right)^{-\alpha p}.
\]
Similarly, under the uniform design, the known mean bias term $b_{j,k}^{(g)}$ is a zero-mean fluctuation whose aggregation up to $J_N-1$ is dominated by the same projection boundary. Thus, the deterministic truncation and approximation of the mean contributes an unavoidable baseline bias term of order $(N/\log N)^{-\alpha p}$.

Additionally, the empirical design noise $e_{j,k}^{(V)} = \frac{1}{N}\sum_{i=1}^N \psi_{j,k}(X_i) \sigma(X_i)^2 - V_{j,k}$ is a standard stochastic fluctuation term arising from the random sampling of the design points $X_i$. Since the design is uniform and the variance function $V = \sigma^2$ is bounded, the term $e_{j,k}^{(V)}$ behaves as a zero-mean sub-Gaussian fluctuation. Its convergence rate is structurally faster than (or at most equal to) the pure stochastic variance term $\Sigma_{j,k}$, meaning it is absorbed into the main statistical rates without degrading the final risk.

Combining the contributions of the bias $b_{j,k}^{(g)}$, the design noise $e_{j,k}^{(V)}$, and the stochastic terms $m_{j,k}$ and $\Sigma_{j,k}$, we obtain the following bounds for the $L^p$-risk of $\widehat{V}^{(\mathrm{km})}$ assuming $V \in B^\beta_{r_0, q''}(\mathbb{S}^d)$:
\begin{itemize}
    \item \textit{Regular regime ($p \le T_h$):} For $p \le r_0 (\beta+d/2)$,
    \[
    \mathbb{E}\left[\|\widehat{V}^{(\mathrm{km})} - V\|_{L^p(\mathbb{S}^d)}^p\right] \lesssim \max\left\{ \left(\frac{N}{\log N}\right)^{-\alpha p},\, \left(\frac{N}{\log N}\right)^{-\frac{\beta p}{2\beta+d}} \right\}.
    \]
    \item \textit{Sparse regime ($p > T_h$):} For $p > r_0 (\beta+d/2)$,
    \[
    \mathbb{E}\left[\|\widehat{V}^{(\mathrm{km})} - V\|_{L^p(\mathbb{S}^d)}^p\right] \lesssim \max\left\{ \left(\frac{N}{\log N}\right)^{-\alpha p},\, \left(\frac{N}{\log N}\right)^{-\frac{p\left(\beta - d\left(\frac{1}{r_0} - \frac{1}{p}\right)\right)}{2\beta - d\left(\frac{1}{r_0} - \frac{1}{2}\right)}} \right\}.
    \]
\end{itemize}

\begin{remark}[Deterministic versus stochastic treatment of the mean]
For any $\alpha > 0$, the deterministic exponent $\alpha$ dictates the baseline floor of the bias decay. Projecting $g$ deterministically up to the maximal resolution $J_N$ yields an estimation error that is purely \emph{bias-limited} by the approximation properties of the Besov space, freeing the mean component from any structural statistical variance accumulation at high resolutions.
\end{remark}

\begin{remark}[Fully known mean function and zero-bias limit]
If the mean function $g$ is fully known (equivalent to a zero-mean regression model), the approximation bias vanishes ($\alpha \to \infty$). The estimator $\widehat{V}^{\mathrm{full}}$ (with $g \equiv 0$) achieves the pure adaptive minimax rates of the class $B^\beta_{r_0, q''}(\mathbb{S}^d)$ without any baseline floor:
\[
\sup_{V\in B^\beta_{r_0, q''}(\mathbb{S}^d)}\mathbb{E}\left[\left\| \widehat V^{\mathrm{full}} - V \right\|_{L^p(\mathbb{S}^d)}^p\right] \lesssim \left(\frac{N}{\log N}\right)^{-\mathcal{R}_h(p) p},
\]
where $\mathcal{R}_h(p)$ is the phase-dependent rate governed by $T_h$. In the supremum limit $p \to \infty$, this yields:
\[
\sup_{V\in B^\beta_{r_0, q''}(\mathbb{S}^d)}\mathbb{E}\left[\left\| \widehat V^{\mathrm{full}} - V \right\|_{L^\infty(\mathbb{S}^d)}\right] \lesssim \left(\frac{N}{\log N}\right)^{-\frac{\beta-\frac{d}{r_0}}{2\beta - d\left(\frac{1}{r_0} - \frac{1}{2}\right)}}.
\]
\end{remark}

\section{A simulation study}\label{sec:numerics}

\begin{figure}[!htbp]
  \centering
  \includegraphics[width=14cm]{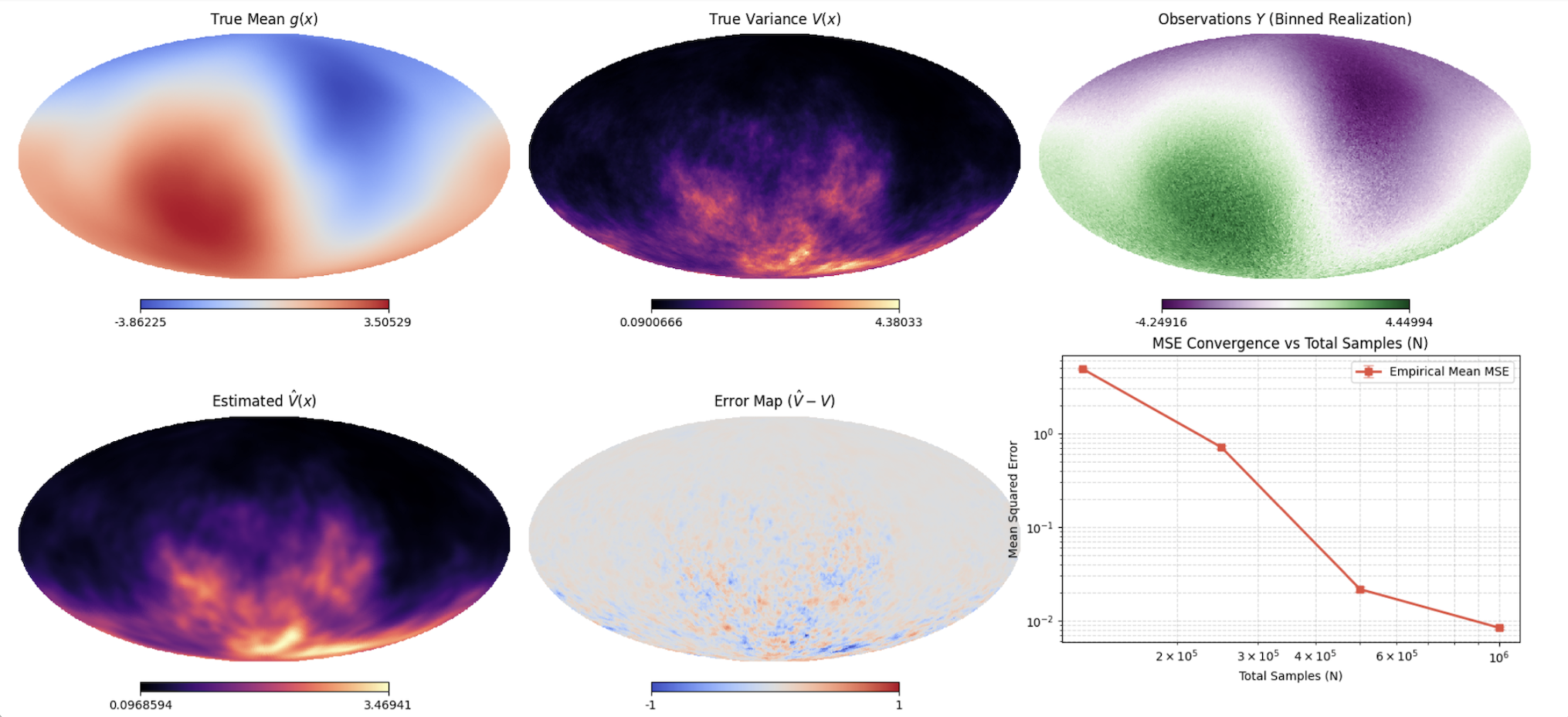}
 \caption{Scenario A (Smooth $g$, Rough $V$): Top row shows the true mean function $g(x)$, the true variance function $V(x)$, and a binned realization of the regression observations with $N=10^6$. Bottom row shows the estimated variance profile $\widehat{V}(x)$, the pointwise error map $(\widehat{V} - V)$, and the log--log empirical MSE convergence curve against the sample size $N$.}
  \label{fig:g-smooth-v-rough}
\end{figure}  

\begin{figure}[!htbp]
  \centering
  \includegraphics[width=14cm]{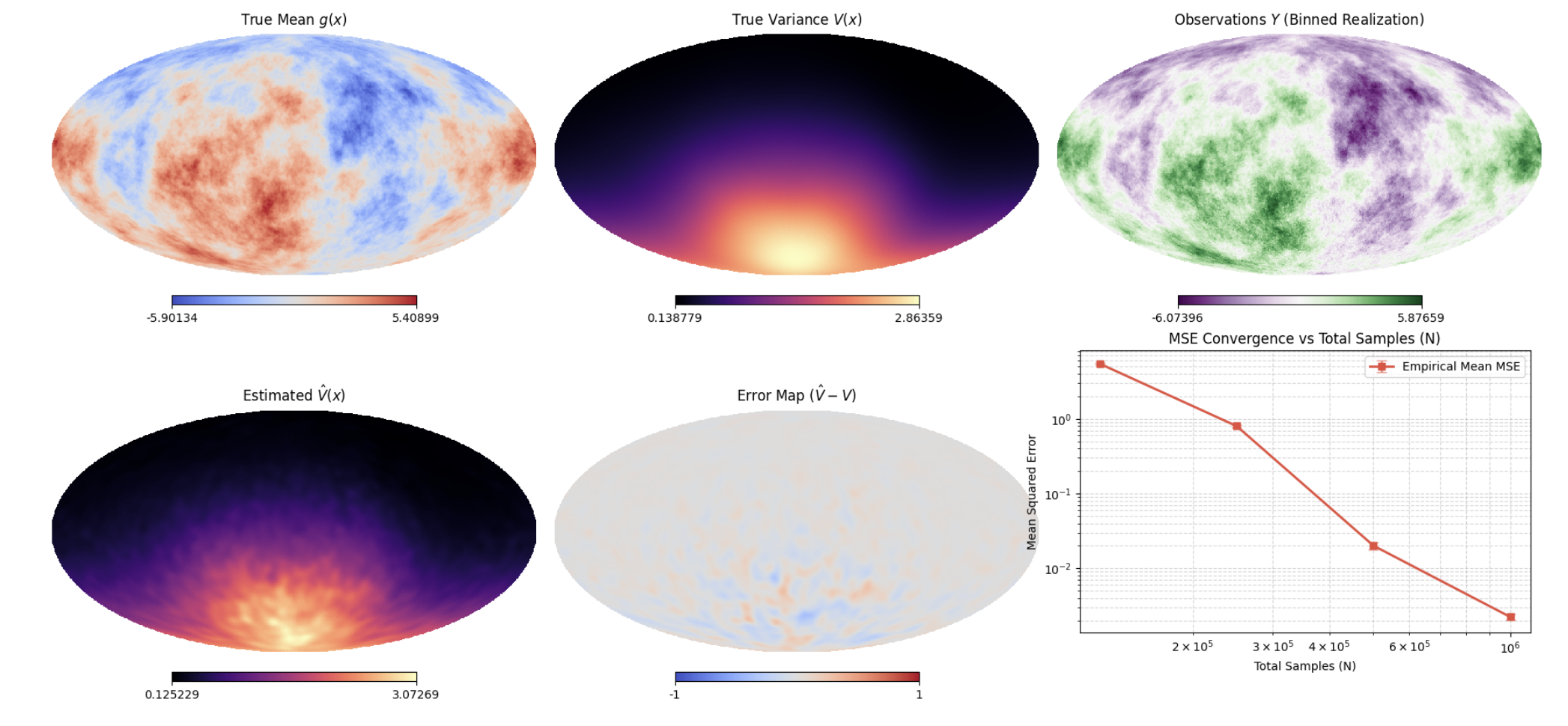}
 \caption{Scenario B (Rough $g$, Smooth $V$): Top row shows the true mean function $g(x)$, the true variance function $V(x)$, and a binned realization of the regression observations with $N=10^6$. Bottom row shows the estimated variance profile $\widehat{V}(x)$, the pointwise error map $(\widehat{V} - V)$, and the log--log empirical MSE convergence curve against the sample size $N$.}
 \label{fig:g-rough-v-smooth}
\end{figure} 

The purpose of this section is to illustrate the performance of the estimator $\widehat{V}$ with a numerical simulation on the sphere $\mathbb{S}^2$.

In order to generate the observations, we use the HEALPix discretization of the sphere at resolution $N_{\rm side} = 64$, which corresponds to a partition of the manifold into a total of $49152$ equal-area pixels. We then draw uniformly from the set of cells to obtain a sample $(X_i)_{i=1,\dots, N}$ approximating uniformly sampled data projected on the HEALPix grid. With this sample, we compute
\begin{equation*}
Y_i=g(X_i)+\sqrt{V(X_i)}\varepsilon_i,\quad i=1,\dots, N,
\end{equation*}
where $\varepsilon_i$ are chosen to be i.i.d. $\mathcal N(0,1)$ random variables.

The functions $g$ and $V$ are constructed as realizations of Gaussian isotropic random fields with the angular power spectrum of the form $C_\ell = \ell^{-2-\gamma}$. As shown in \cite{lang2015isotropic}, such fields belong to the space $\mathcal B^{\gamma/2}_{\infty,\infty}$ of Hölder $\frac{\gamma}{2}$-functions. For $V$, a smooth exponential transformation is applied to such a field in order to obtain a function that is positive everywhere. 

To assess the adaptive behavior of the estimator across the phase-dependent regimes, we consider two distinct operational scenarios:
\begin{itemize}
    \item \textit{Scenario A (Case 1):} Smooth mean $g \in \mathcal B^{2}_{\infty,\infty}(\mathbb{S}^2)$ ($\gamma_g = 4$) and rough variance $V \in \mathcal B^{\frac{1}{2}}_{\infty,\infty}(\mathbb{S}^2)$ ($\gamma_V = 1$). 
    \item \textit{Scenario B (Case 3):} Rough mean $g \in \mathcal B^{\frac{1}{2}}_{\infty,\infty}(\mathbb{S}^2)$ ($\gamma_g = 1$) and smooth variance $V \in \mathcal B^{2}_{\infty,\infty}(\mathbb{S}^2)$ ($\gamma_V = 4$).
\end{itemize}

We evaluate the performance of our thresholded needle-based estimator $\widehat{V}$ for sample sizes ranging from $N = 1.25 \times 10^5$ to $N = 10^6$. The empirical Mean Squared Error (MSE), defined as $\mathbb{E}\|\widehat{V} - V\|_{L^2(\mathbb{S}^2)}^2$, is computed over $100$ independent Monte Carlo replications for each scenario.

The quantitative results of these simulations are summarized in Table~\ref{tab:mse_results}. The reconstructed spherical profiles, pointwise error maps, and empirical rate decays are displayed in Figure \ref{fig:g-smooth-v-rough} (Scenario A) and Figure \ref{fig:g-rough-v-smooth} (Scenario B).

\begin{table}[!htbp]
\centering
\caption{Empirical Mean Squared Error (MSE) of the variance estimator $\widehat{V}$ for Scenarios A and B, averaged over $100$ Monte Carlo runs.}
\label{tab:mse_results}
\begin{tabular}{ccccc}
\toprule
\textbf{Scenario} & \multicolumn{4}{c}{\textbf{Sample Size ($N$)}} \\
\cmidrule{2-5}
 & \textbf{$1.25 \times 10^5$} & \textbf{$2.5 \times 10^5$} & \textbf{$5.0 \times 10^5$} & \textbf{$1.0 \times 10^6$} \\
\midrule
\textbf{Scenario A} (Smooth $g$, Rough $V$) & $4.873$ & $0.712$ & $0.022$ & $0.008$ \\
\textbf{Scenario B} (Rough $g$, Smooth $V$)  & $5.420$ & $0.797$ & $0.020$ & $0.002$ \\
\bottomrule
\end{tabular}
\end{table}

In both scenarios, the visual reconstructions confirm that $\widehat{V}$ successfully captures the true underlying spatial features of the variance. The pointwise error maps demonstrate that the remaining estimation errors are uniformly distributed, showing no critical boundary or peak artifacts. Furthermore, the log-log plots verify that the empirical MSE decays linearly as a function of $N$, illustrating the sharpness of our theoretical minimax convergence rates. Notably, in Scenario B, where the variance is smoother, we observe a faster asymptotic rate of decay ($2.20 \times 10^{-3}$ at $N=10^6$) compared to Scenario A ($8.50 \times 10^{-3}$), in perfect alignment with our theoretical upper bounds.

\section{Proofs}\label{sec:proofs}
This section collects the detailed proofs of all main and auxiliary results presented in the paper.

\begin{proof}[Proof of Proposition \ref{prop:unbiased}]
We establish first the exact expectation of the empirical second-moment needlet coefficients, followed by the uniform bound on their variance.

By definition of the empirical estimator, we have
\[
\begin{split}
\mathbb{E}[\widehat{h}_{j,k}] & = \mathbb{E}\left[\frac{1}{N}\sum_{i=1}^{N} Y_i^2 \psi_{j,k}(X_i)\right] \\
& = \mathbb{E}\left[\frac{1}{N}\sum_{i=1}^{N} \left(g^2(X_i) + 2g(X_i)\sigma(X_i)\varepsilon_i + \sigma^2(X_i)\varepsilon^2_i\right) \psi_{j,k}(X_i)\right].
\end{split}
\]

By leveraging the mutual independence of $\{ \varepsilon_i \}_{i=1}^N$ and $\{ X_i \}_{i=1}^N$, together with the structural noise conditions $\mathbb{E}[\varepsilon_i] = 0$ and $\mathbb{E}[\varepsilon_i^2] = 1$, the joint expectation decomposes as
\[
\begin{split}
\mathbb{E}[\widehat{h}_{j,k}] & = \frac{1}{N}\sum_{i=1}^{N}\mathbb{E}\left[ g^2(X_i) \psi_{j,k}(X_i)\right] + 2 \frac{1}{N}\sum_{i=1}^{N} \mathbb{E}\left[ g(X_i)\sigma(X_i) \psi_{j,k}(X_i)\right] \mathbb{E}\left[ \varepsilon_i \right]\\
&\quad + \frac{1}{N}\sum_{i=1}^{N}\mathbb{E}\left[ \sigma^2(X_i) \psi_{j,k}(X_i)\right] \mathbb{E}\left[ \varepsilon^2_i \right]  \\
& = \int_{\mathbb{S}^d} g^2(x)\psi_{j,k}(x) \,\mathrm{d}x + \int_{\mathbb{S}^d} V(x)\psi_{j,k}(x) \mathrm{d}x \\
& = \left(g^2\right)_{j,k} + v_{j,k},
\end{split}
\]
where the integration is evaluated with respect to the uniform probability measure on $\mathbb{S}^d$. This confirms that $\widehat{h}_{j,k}$ is an unbiased estimator of $h_{j,k}$.

Using the i.i.d.\ nature of the sample pairs, the variance of the empirical sum scales inversely with $N$:
\[
\operatorname{Var}(\widehat h_{j,k}) = \frac{1}{N}\operatorname{Var}\left(Y^2\,\psi_{j,k}(X)\right) \le \frac{1}{N}\mathbb{E}\left[Y^4\psi_{j,k}^2(X)\right],
\]
where $X$ represents a generic covariate vector distributed uniformly on $\mathbb{S}^d$, and $Y = g(X) + \sigma(X)\varepsilon$. Expanding the fourth power of the response variable yields
\[
\begin{split}
\mathbb{E}\left[Y^4\psi_{j,k}^2(X)\right] & = \mathbb{E}\left[\sigma^4(X)\psi_{j,k}^2(X)\right] \mathbb{E}\left[\varepsilon^4\right] + 4\mathbb{E}\left[\sigma^3(X)g(X)\psi_{j,k}^2(X)\right] \mathbb{E}\left[\varepsilon^3\right] \\
& + 6\mathbb{E}\left[\sigma^2(X)g^2(X)\psi_{j,k}^2(X)\right] \mathbb{E}\left[\varepsilon^2\right] + 4\mathbb{E}\left[\sigma(X)g^3(X)\psi_{j,k}^2(X)\right]\mathbb{E}\left[\varepsilon\right] \\
& + \mathbb{E}\left[g^4(X)\psi_{j,k}^2(X)\right].
\end{split}
\]
Recalling that $\mathbb{E}[\varepsilon] = 0$, $\mathbb{E}[\varepsilon_i^2]=1$, and applying the global bounds $\|g\|_{\infty} \le G$, $\|\sigma\|_{\infty} \le S$, alongside the notation $\gamma_m = \mathbb{E}[\varepsilon^m]$, the expression simplifies under the worst-case configuration to
\[
\mathbb{E}\left[Y^4\psi_{j,k}^2(X)\right] \le \left(S^4 \gamma_4 + 4G S^3 \gamma_3 + 6G^2 S^2 + G^4\right) \int_{\mathbb{S}^d} \psi_{j,k}^2(x)\,\mathrm{d}x.
\]
Using \eqref{eqn:norm}, namely, $\int_{\mathbb{S}^d} \psi_{j,k}^2(x)\omega_d^{-1}\,\mathrm{d}x \le C_{\psi,2}^2$, yields
\[
\mathbb{E}\left[Y^4\psi_{j,k}^2(X)\right] \le \left(S^4 \gamma_4 + 4G S^3 \gamma_3 + 6G^2 S^2 + G^4\right) C_{\psi,2}^2 = C_h.
\]
Thus, 
\[\operatorname{Var}(\widehat h_{j,k}) \le \frac{C_h}{N}.\]
\end{proof}
\begin{proof}[Proof of Proposition~\ref{prop:moment}]
We first establish the uniform deviation bound via a three-way decomposition, and then utilize it to derive the centered moment estimates.

\paragraph{\textit{Splitting the deviation event.}}
Let $u > 0$. Expanding $Y_i^2$ via the model equation and subtracting $h_{j,k} = (g^2)_{j,k} + v_{j,k}$, we decompose the error into three distinct processes:
\[
\left\{ \left| \widehat{h}_{j,k} - h_{j,k} \right| \ge u \right\} \subseteq E_1(u) \cup E_2(u) \cup E_3(u),
\]
where
\[
\begin{split}
E_1(u) &= \left\{ \left| \frac{1}{N}\sum_{i=1}^N \left(g^2(X_i)\psi_{j,k}(X_i) - (g^2)_{j,k}\right) \right| \ge \frac{u}{3} \right\}, \\
E_2(u) &= \left\{ \left| \frac{2}{N}\sum_{i=1}^N g(X_i)\sigma(X_i)\varepsilon_i\psi_{j,k}(X_i) \right| \ge \frac{u}{3} \right\}, \\
E_3(u) &= \left\{ \left| \frac{1}{N}\sum_{i=1}^N \left(\sigma^2(X_i)\varepsilon_i^2\psi_{j,k}(X_i) - v_{j,k}\right) \right| \ge \frac{u}{3} \right\}.
\end{split}
\]

\paragraph{\textit{Bounding $E_1(u)$.}}
The variables $Z_{1;i} = g^2(X_i)\psi_{j,k}(X_i)$ are i.i.d.\ and bounded. Under the spherical measure, $\mathbb{E}[Z_{1;i}] = (g^2)_{j,k}$. The variance and $L^\infty$ bounds are given by
\[
\operatorname{Var}(Z_{1;i}) \le \|g\|_\infty^4 \|\psi_{j,k}\|_2^2 \le G^4 C_{\psi,2}^2, \quad \text{and} \quad |Z_{1;i}| \le G^2 C_{\psi,\infty} B^{dj/2}.
\]
Applying Bernstein's inequality to $E_1(u)$ with threshold $u/3$ yields
\[
\Pr(E_1(u)) \le 2\exp\left( -\frac{N u^2}{2\left(9 G^4 C_{\psi,2}^2 + G^2 C_{\psi,\infty} B^{dj/2} u\right)}\right).
\]
Setting $u = \kappa_2 \sqrt{\frac{\log N}{N}}$ and using the resolution scale restriction $B^{dj/2} \le \sqrt{\frac{N}{\log N}}$, we have $B^{dj/2}u \le \kappa_2$. Thus,
\[
\Pr\left(E_1\left(\kappa_2\sqrt{\tfrac{\log N}{N}}\right)\right) \le 2N^{-\delta_{h;1}}, \quad \text{where} \quad \delta_{h;1} = \frac{\kappa_2^2}{2G^2\left(9G^2 C_{\psi,2}^2 + C_{\psi,\infty} \kappa_2\right)}.
\]

\paragraph{\textit{Bounding $E_2(u)$.}}
Consider $E_2(u)$. Conditionally on $X_1,\dots,X_N$, we define the centered linear combination
\[
S_{2;N} := 2\sum_{i=1}^N a_i\varepsilon_i, \qquad a_i := g(X_i)\sigma(X_i)\psi_{j,k}(X_i).
\]
Since the regression errors $\varepsilon_i$ are sub-Gaussian with parameter $\sigma_\varepsilon^2$, there exists an absolute constant $c_{S;2}>0$ such that for any $u>0$ the conditional tail probability satisfies
\[
\Pr\left(\frac{1}{N}|S_{2;N}|\ge \frac{u}{3} \;\middle|\; X_1,\dots,X_N\right) \le 2\exp\!\left(-\frac{c_{S;2}\,u^2}{36\,\sigma_\varepsilon^2\,\frac{1}{N}\sum_{i=1}^N a_i^2}\right).
\]
Let $Z_{2,i} := a_i^2 = \big|g(X_i)\sigma(X_i)\psi_{j,k}(X_i)\big|^2$. The random variables $Z_{2,i}$ are i.i.d.\ and, by applying the uniform needlet bounds $C_{\psi,2}, C_{\psi,4}$, and $C_{\psi,\infty}$, they satisfy:
\[
\begin{aligned}
\mathbb{E}[Z_{2,i}] &\le \|g\|_\infty^2\|\sigma\|_\infty^2\|\psi_{j,k}\|_2^2 \le C^2_{\psi,2} G^2 S^2, \\
|Z_{2,i}| &\le \|g\|_\infty^2\|\sigma\|_\infty^2\|\psi_{j,k}\|_\infty^2 \le C_{\psi,\infty}^2 G^2 S^2 B^{dj}, \\
\operatorname{Var}(Z_{2,i}) &\le \mathbb{E}[Z_{2,i}^2]\le \|g\|_\infty^4\|\sigma\|_\infty^4\|\psi_{j,k}\|_4^4 \le C_{\psi,4}^4 G^4 S^4 B^{dj}.
\end{aligned}
\]
By integrating the conditional tail bound and introducing a variance fluctuation threshold $\upsilon>0$ via a decoupling argument, we obtain
\[
\Pr\left(E_2(u)\right) \le 2\exp\!\left(-\frac{c_{S;2} N u^2}{36\sigma_\varepsilon^2\left(\mathbb{E}[Z_{2,i}]+\upsilon\right)}\right) + \Pr\left(\left|\frac{1}{N}\sum_{i=1}^N (Z_{2,i}-\mathbb{E}[Z_{2,i}])\right|>\upsilon\right).
\]
Applying Bernstein's inequality to the second probability under the established bounds for $\operatorname{Var}(Z_{2,i})$ and $|Z_{2,i}|$ yields
\[
\Pr\left(\left|\frac{1}{N}\sum_{i=1}^N (Z_{2,i}-\mathbb{E}[Z_{2,i}])\right|>\upsilon\right) \le 2\exp\!\left(-\frac{N\upsilon^2}{2B^{dj}\big(C^4_{\psi,4}G^4 S^4 + \tfrac{1}{3}C_{\psi,\infty}^2 G^2 S^2 \upsilon\big)}\right).
\]
Consequently, the unconditional bound for $E_2(u)$ is given by
\[
\begin{split}
\Pr\left(E_2(u)\right) &\le 2\exp\!\left(-\frac{c_{S;2}Nu^2}{36\sigma_\varepsilon^2\left(C_{\psi,2}^2 G^2 S^2 +\upsilon\right)}\right)\\ 
&+ 2\exp\!\left( -\frac{N \upsilon^2}{2B^{dj}\left(C_{\psi,4}^4 G^4 S^4+\frac{1}{3}C_{\psi,\infty}^2 G^2 S^2 \upsilon \right)}\right).
\end{split}
\]
Now we choose the standard resolution scale $u=\kappa_2\sqrt{\frac{\log N}{N}}$ and evaluate at the spatial restriction $B^{dj}\le \frac{N}{\log N}$. By setting the explicit balancing threshold
\[
\upsilon = \kappa_2 \cdot \left( \frac{\sqrt{c_{S;2}} G S}{\sigma_{\varepsilon}} \min \left\{ \frac{C_{\psi,4}^2}{C_{\psi,2}}, C_{\psi,\infty} \right\} \right),
\]
both exponential terms decay polynomially as $N^{-\delta_{h;2}}$. To ensure that $\Pr\left(E_2\left(\kappa_2\sqrt{\frac{\log N}{N}}\right)\right) \le 4N^{-\delta}$ for a targeted allocation exponent $\delta > 0$, the operational calibration threshold $\kappa_{2,2}(\delta)$ must satisfy
\[
\kappa_{2,2}(\delta) = \max \left\{ \frac{6 C_{\psi,2} G S \sigma_\varepsilon}{\sqrt{c_{S;2}}} \sqrt{\delta}, \; \frac{12 C_{\psi,\infty} G S \sigma_\varepsilon}{\sqrt{6 c_{S;2}}}\, \delta \right\},
\]
where the overall decay rate is controlled by
\[
\delta_{h;2} = \min \left( \frac{c_{S;2}\kappa_2^2}{36\sigma_{\varepsilon}^2 C_{\psi,2}^2 G^2 S^2}, \; \frac{\sqrt{6c_{S;2}}\kappa_2}{12C_{\psi,\infty} G S \sigma_{\varepsilon}} \right).
\]

\paragraph{\textit{Bounding $E_3(u)$.}}
Decompose the event into a sub-exponential process and a bounded design process:
\[
\begin{aligned}
\Pr(E_3(u)) &\leq \Pr\left(\left|\frac{1}{N}\sum_{i=1}^N \sigma^2(X_i)\psi_{j,k}(X_i)\eta_i\right| \geq \frac{u}{6}\right) + \Pr\left(\left|\frac{1}{N}\sum_{i=1}^N \sigma^2(X_i)\psi_{j,k}(X_i) - v_{j,k}\right| \geq \frac{u}{6}\right) \\
&=: \Pr(E_{3,1}(u)) + \Pr(E_{3,2}(u)).
\end{aligned}
\]
For $E_{3,1}(u)$, let $a'_i:=\sigma^2(X_i)\psi_{j,k}(X_i)$ and $S_{3;N} = \frac{1}{N} \sum_{i=1}^N (a'_i)^2$. Introducing the concentration event $A_{\upsilon'} = \{ | S_{3;N}- \mathbb{E}[(a'_i)^2] | < \upsilon' \}$ for a fixed threshold $\upsilon'>0$, Bernstein's inequality for sub-exponential random variables conditional on the $\{X_i\}_{i=1}^N$ fields yields:
\[
\Pr(E_{3,1}(u)\cap A_{\upsilon'})\le 2\exp\!\left(-c_{\eta}\min\left( \frac{u^2N}{36\,K_{\varepsilon}^4 (S^4 C^2_{\psi,2}+\upsilon')}, \;\frac{uN}{6\,K^2_{\varepsilon} S^2 C_{\psi,\infty}B^{dj/2}} \right)\right).
\]
On the complementary event $A_{\upsilon'}^C$, setting $Z_{3;i}=(a'_i)^2$, the uniform needlet bounds yield $\operatorname{Var}(Z_{3;i}) \le S^8 C_{\psi,4}^4 B^{dj}$ and $|Z_{3;i}| \le S^4 C_{\psi,\infty}^2 B^{dj}$. Applying Bernstein's inequality on the design space gives:
\[
\Pr\left( A^{C}_{\upsilon'}\right) \le 2\exp\left( -\frac{N (\upsilon')^2}{2B^{dj}\left(S^8 C_{\psi,4}^4 + \frac{1}{3}\!S^4 C_{\psi,\infty}^2\upsilon' \right)} \right).
\]
We now evaluate the thresholds at $u=\kappa_{3,1}\sqrt{\frac{\log N}{N}}$ under the standard resolution restriction $B^{dj}\leq \frac{N}{\log N}$. Choosing the static parameter assignment $\upsilon' = S^4 C_{\psi,2}^2$ simplifies the unconditional tail bound for $E_{3,1}$ to:
\[
\begin{aligned}
\Pr \left(E_{3,1}\left(\kappa_{3,1} \sqrt{\frac{\log N}{N}}\right)\right) &\le 2\exp\!\left(-c_{\eta}\min\!\left( \frac{\kappa_{3,1}^2\log N}{72\,K_{\varepsilon}^4 S^4 C_{\psi,2}^2}, \;\frac{\kappa_{3,1} \log N}{6\,K_{\varepsilon}^2 S^2 C_{\psi,\infty}} \right)\right) \\
& + 2\exp\left( -\frac{\log N \cdot C_{\psi,2}^4}{2\left(C_{\psi,4}^4 + \frac{1}{3}C_{\psi,\infty}^2 C_{\psi,2}^2\right)} \right).
\end{aligned}
\]
For the deterministic design component $E_{3,2}(u)$, Bernstein's inequality evaluated at $u = \kappa_2\sqrt{\frac{\log N}{N}}$ and $B^{dj/2}\le \sqrt{\frac{N}{\log N}}$ yields:
\[
\Pr\left(E_{3,2}\left(\kappa_2\sqrt{\frac{\log N}{N}}\right)\right) \le 2 \exp\left(- \frac{\kappa_2^2 \log N}{4 S^2\left(18 S^2 C_{\psi,2}^2 + \frac{1}{3} C_{\psi,\infty} \kappa_2\right)} \right).
\]
Combining these terms, the polynomial decay rate exponent $\delta_{h;3}$ satisfying $\Pr\left(E_3\left(\kappa_2\sqrt{\frac{\log N}{N}}\right)\right) \le 6 N^{-\delta_{h;3}}$ is given explicitly by the structural minimum:
\[
\delta_{h;3} = \min \left\{ \delta_{3;2}(\kappa_{3,1}), \; \frac{C_{\psi,2}^4}{2 C_{\psi,4}^4 + \frac{2}{3}C_{\psi,\infty}^2 C_{\psi,2}^2}, \; \frac{\kappa_2^2}{4 S^2\left(18 S^2 C_{\psi,2}^2 + \frac{1}{3} C_{\psi,\infty} \kappa_2\right)} \right\},
\]
where the sub-exponential error factor is defined as
\[
\delta_{3;2}(\kappa_{3,1}) = c_{\eta}\min\!\left(\frac{\kappa_{3,1}^2}{72 K_{\varepsilon}^4 S^4 C_{\psi,2}^2},\; \frac{\kappa_{3,1}}{6 K_{\varepsilon}^2 S^2 C_{\psi,\infty}}\right).
\]
For a sufficiently large choice of the calibration threshold $\kappa_{3,1}$, the linear scaling regimes dominate, allowing direct analytic inversion for any allocated exponent $\delta > 0$.

\paragraph{\textit{Union bound for all terms.}}
Recall that for any $u>0$,
\[
\left\{ \left| \widehat{h}_{j,k} - h_{j,k} \right| \ge u \right\} \subseteq E_1(u) \cup E_2(u) \cup E_3(u).
\]
By the union bound, evaluating at the resolution scale $u = \kappa_2\sqrt{\frac{\log N}{N}}$, we obtain
\[
\Pr\left( \left| \widehat{h}_{j,k} - h_{j,k} \right| \ge \kappa_2\sqrt{\frac{\log N}{N}} \right) \le \sum_{m=1}^3 \Pr\left(E_m\left(\kappa_2\sqrt{\frac{\log N}{N}}\right)\right) \le 12 N^{-\delta_{h}},
\]
where $\delta_{h} = \min\{\delta_{h;1},\delta_{h;2},\delta_{h;3}\}$. To ensure that the total tail probability decays at a polynomial rate bounded by a targeted allocation exponent $\delta > 0$, the operational calibration threshold $\kappa_2$ must be chosen sufficiently large.

For a fixed target exponent $\delta_h = \delta$, the exact minimal admissible choices of $\kappa_2$ for each component process are obtained by direct algebraic inversion of the structural exponents derived under the linear scaling regimes:
\[
\begin{aligned}
\kappa_{2,1}(\delta) &= 3 G^2 C_{\psi,\infty} \delta + \sqrt{9 G^4 C_{\psi,\infty}^2 \delta^2 + 18 G^4 C_{\psi,2}^2 \delta}, \\
\kappa_{2,2}(\delta) &= \max \left\{ \frac{6 C_{\psi,2} G S \sigma_\varepsilon}{\sqrt{c_{S;2}}} \sqrt{\delta}, \; \frac{12 C_{\psi,\infty} G S \sigma_\varepsilon}{\sqrt{6 c_{S;2}}}\, \delta \right\}, \\
\kappa_{2,3}(\delta) &= \max \left\{ \frac{6 K_{\varepsilon}^2 S^2 C_{\psi,\infty}}{c_{\eta}}\, \delta, \; 4 S^2 C_{\psi,\infty} \delta \right\}.
\end{aligned}
\]
Consequently, the minimal operational calibration threshold is defined by the maximum of these individual thresholds:
\[
\kappa_2^{\min}(\delta) = \max\{\kappa_{2,1}(\delta),\, \kappa_{2,2}(\delta),\, \kappa_{2,3}(\delta)\}.
\]
Setting $\kappa_2 \ge \kappa_2^{\min}(\delta)$ ensures that all three contributions to the empirical deviation process are of the same exponential order in $N$, with a total probability controlled by
\begin{equation*}\label{eqn:unionbound}
\Pr\left( \left| \widehat{h}_{j,k} - h_{j,k} \right| \ge \kappa_2\sqrt{\frac{\log N}{N}} \right) \le 12 N^{-\delta},
\end{equation*}
which rigorously justifies the rate $u \sim \sqrt{\log N / N}$ in the spatial concentration analysis and shows that the polynomial decay rate $\delta_h$ is directly controlled by $\kappa_2$.

\paragraph{\textit{Centered absolute moment inequalities.}} To show the moment bounds for $r\geq 1$, we proceed similarly to the probability bounds and write
\[
\begin{split}
\mathbb{E} \left[\left| \widehat{h}_{j,k}- h_{j,k}\right|^r \right] &= \mathbb{E} \left[\left| \frac{1}{N}\sum_{i=1}^{N}\left(g^2(X_i) +2 g(X_i)\sigma(X_i)\varepsilon_i + \sigma^2(X_i)\varepsilon^2_i\right) {\psi}_{j,k}(X_i) - h_{j,k}\right|^r \right] \\
&\le 3^{r-1} \left(\mathbb{E} |A_1|^r+\mathbb{E} |A_2|^r + \mathbb{E} |A_3|^r\right),
\end{split}
\]
where
\[
\begin{aligned}
A_1 &= \frac{1}{N}\sum_{i=1}^{N} g^2(X_i){\psi}_{j,k}(X_i) - (g^2)_{j,k}, \\
A_2 &= \frac{2}{N}\sum_{i=1}^{N} g(X_i)\sigma(X_i)\varepsilon_i {\psi}_{j,k}(X_i), \\
A_3 &= \frac{1}{N}\sum_{i=1}^{N} \sigma^2(X_i)\varepsilon^2_i {\psi}_{j,k}(X_i) - v_{j,k}.
\end{aligned}
\]
We analyze these terms one by one. For $A_1$, we have
\[
\mathbb{E} |A_1|^r=\frac{1}{N^r}\mathbb{E} \left| \sum_{i=1}^{N} Z_{1i}-\mathbb{E} Z_{1i}\right|^r.
\]
Since $Z_{1i}$ are i.i.d. random variables with finite moments, the fact that it is of order $N^{-\frac{r}{2}}$ follows directly from Theorem 4 in \cite{BVB65} and the fact that the variance of $Z_{1i}$ is uniformly bounded with respect to $j$ and $k$. The same argument applies also to $A_2$: note that the summands in $A_2$ are centered, and the uniform boundedness of their variance has been shown above: $\mathbb{E}\left[ Z_{2,i} \right] \leq C_{\psi,2}^2 \|g\|_\infty^2 \|\sigma\|_\infty^2$. In $A_3$, we again have i.i.d. random variables $Z_{3i}:=\sigma^2(X_i)\varepsilon^2_i {\psi}_{j,k}(X_i)$ with expectation $v_{j,k}$ (since $\varepsilon_i$ all have unit variance) and variance
\[
\mathrm{Var}(Z_{3i})\leq \int_{\mathbb{S}^d} \sigma^4(x)\vert {\psi}_{j,k}(x)\vert ^2 \,\mathrm{d}x \cdot \mathbb{E} \varepsilon^4_1 \lesssim \|\sigma\|_\infty^4,
\]
which allows us to apply Theorem 4 from \cite{BVB65} again and obtain the same bound.

We focus now on \eqref{eqn:expectsup}. Observe that
\[
\mathbb{E}\left[\sup_{k=1,\ldots,K_j}\left|\widehat{h}_{j,k}-h_{j,k}\right|^r\right] \le \int_{0}^{\infty} r u^{r-1} \Pr\left(\sup_{k=1,\ldots,K_j} \left| \widehat {h}_{j,k}-{h}_{j,k}\right| \geq u \right) \,\mathrm{d}u.
\]
Also, for $B^{dj}\le N$,
\[
\Pr(E_1(u)) \le 2\exp\left( -\frac{N u^2}{18\|g\|_\infty^4 C^2_{\psi,2}}\right) + 2\exp\!\left( -\frac{\sqrt{N} u} {12\|g\|_\infty^2 C_{\psi,\infty}} \right),
\]
while for sufficiently large $\upsilon$
\[\begin{split}
\Pr\left(E_2(u)\right) &\le 2\exp\!\left(-\frac{c_{S;2}Nu^2}{72\sigma_\varepsilon^2C_{\psi,2}^2\|g\|_\infty^2\|\sigma\|_\infty^2}\right) +2\exp\! \left( -\frac{c_{S;2} Nu^2}{72\sigma_\varepsilon^2\upsilon}\right) \\ &+ 2\exp\!\left(-\frac{3\upsilon}{4C_{\psi,\infty}^2\|g\|_\infty^2\|\sigma\|_\infty^2}\right).\end{split}
\]
To balance the second and the third exponential terms, we choose $\upsilon>0$ so that the two exponents are equal:
\[
\frac{c_{S;2}\,N u^2}{72\,\sigma_\varepsilon^2\,\upsilon} = \frac{3\,\upsilon}{4 C_{\psi,\infty}^2 \|g\|_\infty^2 \|\sigma\|_\infty^2}.
\]
Solving for $\upsilon$ gives
\[
\upsilon = u\sqrt{N}\,\frac{\sqrt{c_{S;2}/54}}{\sigma_\varepsilon\,C_{\psi,\infty}\,\|g\|_\infty\,\|\sigma\|_\infty}.
\]
With this choice, both terms are of the same order and we obtain the simplified bound
\[
2\exp\left(-\frac{c_{S;2}\,N u^2}{72\,\sigma_\varepsilon^2\,\upsilon}\right) + 2\exp\left(-\frac{3\,\upsilon}{4 C_{\psi,\infty}^2 \|g\|_\infty^2 \|\sigma\|_\infty^2}\right) \le 4 \exp\left(-u\sqrt{N}\,\frac{\sqrt{c_{S;2}/54}}{\sigma_\varepsilon\,C_{\psi,\infty}\,\|g\|_\infty\,\|\sigma\|_\infty}\right).
\]
Thus, in this case
\[
\Pr\left(E_2(u)\right) \le 2\exp\!\left(-\frac{c_{S;2}Nu^2}{72\sigma_{\varepsilon}^2C_{\psi,2}^2\|g\|_\infty^2\|\sigma\|_\infty^2}\right) + 4 \exp\left(-u\sqrt{N}\,\frac{\sqrt{c_{S;2}/54}}{\sigma_\varepsilon\,C_{\psi,\infty}\,\|g\|_\infty\,\|\sigma\|_\infty}\right).
\]
Under the assumption $B^{dj}\le N$ and that $\upsilon'$ is large, we can remove the $\min$ in the first exponential of $E_{3,1}(u)$ and write
\[
\Pr(E_{3,1}(u)) \lesssim 2\exp\Big(-c_\eta\,\frac{N u^2}{36 K_\varepsilon^4 \upsilon'}\Big) + 2\exp\Big(-\frac{N (\upsilon')^2}{2 (\|\sigma\|_\infty^8 C_{\psi,4}^4 B^{dj} + \frac{1}{3} \|\sigma\|_\infty^4 C_{\psi,\infty}^2 B^{dj}\upsilon')}\Big).
\]
Choosing $\upsilon'$ to balance the two terms by equating the leading behavior of the exponents:
\[
c_\eta\,\frac{N u^2}{36 K_\varepsilon^4 \upsilon'} = \frac{N (\upsilon')^2}{2 \|\sigma\|_\infty^8 C_{\psi,4}^4 B^{dj}},
\]
we obtain
\[
\upsilon' = \left[ \frac{2 c_\eta \|\sigma\|_\infty^8 C_{\psi,4}^4}{36 K_\varepsilon^4}\, u^2 B^{dj} \right]^{1/3}.
\]
Using $B^{dj}\le N$, this gives the leading-order bound
\[
\Pr(E_{3,1}(u)) \lesssim 4 \exp\left( - c\, u^{2/3} N^{1/3} \right),
\]
with
\[
c = \frac{1}{K_\varepsilon^{4/3}} \left( \frac{c_\eta \|\sigma\|_\infty^8 C_{\psi,4}^4}{18} \right)^{1/3}.
\]
This shows that, for large $\upsilon'$, the tail probability decays sub-exponentially with $u^{2/3} N^{1/3}$.
Finally,
\[
\Pr \left(E_{3,2}(u)\right) \leq 2 \exp\left(- \frac{N u^2}{144 \|\sigma\|_\infty^4 C^2_{\psi,2}} \right) + 2 \exp\left(- \frac{\sqrt{N} u}{ \frac{144}{3} \|\sigma\|_\infty^2 C_{\psi,\infty}} \right).
\]
Combining all these terms, we have
\[
\Pr\left(\sup_{k=1,\ldots,K_j} \left| \widehat {h}_{j,k}-{h}_{j,k}\right| \geq \!u\! \right) \!\leq\! C_1\exp\left(-c_1 Nu^2\right)+C_2\exp\left(-c_2 \sqrt{N}u\right)+C_3\exp\left( - c_3 u^{2/3} N^{1/3} \right).
\]
Following \cite{DGM12}, fixing a proper constant $A^\prime$, we separate the integration region:
\[
\begin{aligned}
\mathbb{E}\left[\sup_{k=1,\ldots,K_j}\left|\widehat{h}_{j,k}-h_{j,k}\right|^r\right] &\leq \int_{u\leq A^\prime \frac{j+1}{\sqrt{N}}} r u^{r-1} \,\mathrm{d}u + \int_{u> A^\prime \frac{j+1}{\sqrt{N}}} \widetilde{C} r u^{r-1}\exp(-c_3 u^{2/3}N^{1/3}) \,\mathrm{d}u \\
&\lesssim \left(j+1\right)^r N^{-\frac{r}{2}},
\end{aligned}
\]
as claimed.
\end{proof}

\begin{proof}[Proof of Lemma \ref{lemma:unbiasedgsquared}]
Let us preliminarily consider \eqref{eqn:bias1}. On the one hand, we have that
\begin{equation*}
\widehat{\left(g^2\right)}_{j,k}
=\sum_{j_1=0}^{J_N^\prime-1}\sum_{j_2=0}^{J_N^\prime-1}\sum_{k_1=1}^{K_{j_1}}\sum_{k_2=1}^{K_{j_2}}
\widehat g^{(1)}_{j_1,k_1} \widehat g^{(2)}_{j_2,k_2}
\int_{\mathbb{S}^d}\psi_{j_1,k_1}(x)\psi_{j_2,k_2}(x)\psi_{j,k}(x)\,\mathrm{d}x.
\end{equation*}
On the other hand, taking expectations and using the independence of the two subsamples yield
\[
\mathbb{E}\left[\widehat g^{(1)}_{j_1,k_1}\widehat g^{(2)}_{j_2,k_2}\right]
=\mathbb{E}\left[\widehat g^{(1)}_{j_1,k_1}\right] \mathbb{E}\left[\widehat g^{(2)}_{j_2,k_2}\right].
\]
For $m=1,2$, recalling that $\mathbb{E}\left[\widehat g^{(m)}_{j,k}\right]=g_{j,k}$, and denoting the thresholding indicators by
\[
E^{(m)}_{j,k}=\mathbf{1}\left\{\left| \widehat g^{(m)}_{j,k}\right| < \kappa_g \sqrt{\tfrac{\log N^\prime}{N^\prime}}\right\},
\]
we obtain the expansion
\[
\mathbb{E}\left[\widehat{(g^2)}_{j,k}\right] = (g^2)_{j,k} - T^{\mathrm{thres}}_{j,k} - T^{\mathrm{trunc}}_{j,k} - T^{\mathrm{cross}}_{j,k},
\]
where the thresholding bias is given by
\[
T^{\mathrm{thres}}_{j,k} = \mathbb{E} \left[\sum_{j_1=0}^{J_{N^\prime}-1}\sum_{j_2=0}^{J_{N^\prime}-1}\sum_{k_1,k_2} g_{j_1,k_1}\,g_{j_2,k_2} E^{(1)}_{j_1,k_1}E^{(2)}_{j_2,k_2} \int_{\mathbb{S}^d}\psi_{j_1,k_1}(x)\psi_{j_2,k_2}(x)\psi_{j,k}(x)\,\mathrm{d}x \right],
\]
the truncation bias accounts for omitted scales above the cutoff $J_{N^\prime}$:
\[
T^{\mathrm{trunc}}_{j,k} = \sum_{j_1\ge J_{N^\prime}}\sum_{j_2\ge J_{N^\prime}} \sum_{k_1,k_2} g_{j_1,k_1}\,g_{j_2,k_2} \int_{\mathbb{S}^d}\psi_{j_1,k_1}(x)\psi_{j_2,k_2}(x)\psi_{j,k}(x)\,\mathrm{d}x,
\]
and the cross-term bias is given by
\[
T^{\mathrm{cross}}_{j,k} = 2\sum_{j_1=0}^{J_{N^\prime}-1}\sum_{j_2\ge J_{N^\prime}} \sum_{k_1,k_2} g_{j_1,k_1}\,g_{j_2,k_2} \int_{\mathbb{S}^d}\psi_{j_1,k_1}(x)\psi_{j_2,k_2}(x)\psi_{j,k}(x)\,\mathrm{d}x.
\]

As far as the thresholding bias $T^{\mathrm{thres}}_{j,k}$ is concerned, first observe that from Remark \ref{rem:Js}, for $m=1,2$, if $j_m\in\left\{J_s,\ldots, J_{N^\prime}-1\right\}$, then $E_{j_m,k_m}^{(m)} \equiv \emptyset$. Combining this consideration with Fubini's theorem, we can write:
\[
\begin{aligned}
\left| T^{\mathrm{thres}}_{j,k} \right| &= \left| \int_{\mathbb{S}^d} \mathbb{E} \left[\sum_{j_1=0}^{J_{s}-1}\sum_{j_2=0}^{J_{s}-1}\sum_{k_1,k_2} g_{j_1,k_1}\,g_{j_2,k_2} E^{(1)}_{j_1,k_1}E^{(2)}_{j_2,k_2}\psi_{j_1,k_1}(x)\psi_{j_2,k_2}(x)\right]\psi_{j,k}(x)\,\mathrm{d}x \right| \\
&\le \int_{\mathbb{S}^d} \left| \mathbb{E} \left[\sum_{j_1=0}^{J_{s}-1}\sum_{j_2=0}^{J_{s}-1}\sum_{k_1,k_2} g_{j_1,k_1}\,g_{j_2,k_2} E^{(1)}_{j_1,k_1}E^{(2)}_{j_2,k_2}\psi_{j_1,k_1}(x)\psi_{j_2,k_2}(x)\right] \right| \big| \psi_{j,k}(x)\big|\,\mathrm{d}x.
\end{aligned}
\]
For $m=1,2$, we exploit Besov embeddings \eqref{eqn:embedding} and stochastic deviation properties \eqref{eq:probboundg} to establish the following bound uniformly in $x \in \mathbb{S}^d$:
\[
\begin{aligned}
& \left| \mathbb{E}\left[\sum_{j_m=0}^{J_{N^\prime}-1}\sum_{k_m=1}^{K_{j_m}} g_{j_m,k_m}\psi_{j_m,k_m}(x) E^{(m)}_{j_m,k_m}\right]\right| \\
& \qquad\le \sum_{j_m=0}^{J_{\alpha}-1}\sum_{k_m=1}^{K_{j_m}} \big| g_{j_m,k_m}\big| \big|\psi_{j_m,k_m}(x)\big| \mathbb{E} \left[ \mathbf{1}\left\{\left| \widehat g^{(m)}_{j_m,k_m}\right| < \kappa_g \sqrt{\tfrac{\log N^\prime}{N^\prime}}\right\} \mathbf{1}\left\{\left| g_{j_m,k_m}\right| \ge 2\kappa_g \sqrt{\tfrac{\log N^\prime}{N^\prime}}\right\} \right] \\
& \qquad\quad+ \sum_{j_m=0}^{J_{\alpha}-1}\sum_{k_m=1}^{K_{j_m}} \big| g_{j_m,k_m}\big| \big|\psi_{j_m,k_m}(x)\big| \mathbb{E} \left[ \mathbf{1}\left\{\left| \widehat g^{(m)}_{j_m,k_m}\right| < \kappa_g \sqrt{\tfrac{\log N^\prime}{N^\prime}}\right\} \mathbf{1}\left\{\left|g_{j_m,k_m}\right| < 2\kappa_g \sqrt{\tfrac{\log N^\prime}{N^\prime}}\right\} \right] \\
& \qquad\le \sum_{j_m=0}^{J_{\alpha}-1}\sum_{k_m=1}^{K_{j_m}} \big| g_{j_m,k_m}\big| \big|\psi_{j_m,k_m}(x)\big| \Pr \left(\left|\widehat g^{(m)}_{j_m,k_m} - g_{j_m,k_m} \right| > \kappa_g \sqrt{\tfrac{\log N^\prime}{N^\prime}}\right) \\
& \qquad\quad + \sum_{j_m=0}^{J_{\alpha}-1}\sum_{k_m=1}^{K_{j_m}} \big| g_{j_m,k_m}\big| \big|\psi_{j_m,k_m}(x)\big| \mathbf{1}\left\{\left| g_{j_m,k_m}\right| < 2\kappa_g \sqrt{\tfrac{\log N^\prime}{N^\prime}}\right\} \\
& \qquad=: T_1^{\mathrm{thres}} + T_2^{\mathrm{thres}}.
\end{aligned}
\]

Consider $T_1^{\mathrm{thres}}$. First, we use \eqref{eq:probboundg} to get:
\[
T_1^{\mathrm{thres}} \le \left(N^\prime\right)^{-\gamma_g} \sum_{j_m=0}^{J_{\alpha}-1}\sum_{k_m=1}^{K_{j_m}} \big| g_{j_m,k_m}\big| \big|\psi_{j_m,k_m}(x)\big|.
\]
Now, using the sup-norm bound \eqref{eqn:norm} and the Besov embedding property \eqref{eqn:embedding}, we find:
\[
\begin{aligned}
\sum_{k_m=1}^{K_{j_m}} \big| g_{j_m,k_m}\big| \big| \psi_{j_m,k_m}(x)\big| &\le C B^{dj_m/2} \sum_{k_m=1}^{K_{j_m}} |g_{j_m,k_m}| \\
&\le C B^{dj_m/2} K_{j_m}^{1-1/\rho} \left( \sum_{k_m=1}^{K_{j_m}} |g_{j_m,k_m}|^\rho \right)^{1/\rho}.
\end{aligned}
\]
Applying the Besov bound on the $\ell^\rho$ norm of the coefficients yields:
\[
\sum_{k_m=1}^{K_{j_m}} |g_{j_m,k_m}|\,|\psi_{j_m,k_m}(x)| \lesssim \|g\|_{B^\alpha_{\rho,q^\prime}} K_{j_m}^{1-1/\rho} B^{j_m(d/2-\alpha)}.
\]
Observe now that summing over the active resolution scales gives:
\[
\begin{aligned}
\sum_{j_m=0}^{J_\alpha-1} \sum_{k_m=1}^{K_{j_m}} |g_{j_m,k_m}|\,|\psi_{j_m,k_m}(x)| &\lesssim \|g\|_{B^\alpha_{\rho,q^\prime}} \sum_{j_m=0}^{J_\alpha-1} B^{j_m(3d/2 - \alpha - d/\rho)} \\
&\lesssim \|g\|_{B^\alpha_{\rho,q^\prime}} B^{J_\alpha(3d/2 - \alpha - d/\rho)}.
\end{aligned}
\]
Using \eqref{eqn:js} and $N^\prime=N/2$ leads to:
\[
T_1^{\mathrm{thres}} \lesssim N^{-\delta_\gamma} \left(\frac{N}{\log N}\right)^{\frac{3d/2 - \alpha - d/\rho}{2(\alpha+1)}}.
\]
Now note that for the second components $T_2^{\mathrm{thres}}$:
\[
\begin{aligned}
T_2^{\mathrm{thres}} &\lesssim \sum_{j_m=0}^{J_\alpha-1} B^{dj_m/2} \cdot 2\kappa_g \left(\frac{\log N}{N}\right)^{1/2} \\
&\lesssim 2 \kappa_g \left(\frac{\log N}{N}\right)^{1/2} \sum_{j_m=0}^{J_\alpha-1} B^{dj_m/2} \\
&\lesssim 2 \kappa_g \left(\frac{\log N}{N}\right)^{1/2} B^{dJ_\alpha/2} \\
&= \left(\frac{\log N}{N}\right)^{\frac{\alpha}{2(\alpha+1)}}.
\end{aligned}
\]
Combining these definitions yields:
\[
\begin{aligned}
\left|T^{\mathrm{thres}}_{j,k} \right| &\lesssim (T_1^{\mathrm{thres}} + T_2^{\mathrm{thres}})^2\left\| \psi_{j,k}\right\|_{1} \\
&\lesssim \left(N^{-\delta_\gamma} \left(\frac{N}{\log N}\right)^{\frac{3d/2 - \alpha - d/\rho}{2(\alpha+1)}} + \left(\frac{N}{\log N}\right)^{-\frac{\alpha}{2(\alpha+1)}} \right)^2 B^{-dj/2}.
\end{aligned}
\]
Simple algebraic manipulations yield:
\[
T^{\mathrm{thres}}_{j,k} \lesssim
\begin{cases}
N^{-2\delta_\gamma} \left(\frac{N}{\log N}\right)^{\frac{3d/2-\alpha-d/\rho}{\alpha+1}} B^{-dj/2}, & \text{if } \delta_\gamma < \frac{3d/2 - d/\rho}{2(\alpha+1)}, \\
\left(\frac{N}{\log N}\right)^{-\frac{\alpha}{\alpha+1}} B^{-dj/2}, & \text{if } \delta_\gamma \ge \frac{3d/2 - d/\rho}{2(\alpha+1)}.
\end{cases}
\]
Fixing $\delta_\gamma \ge \frac{3d/2 - d/\rho}{2(\alpha+1)}$ gives:
\[
T^{\mathrm{thres}}_{j,k} \lesssim \left(\frac{N}{\log N}\right)^{-\frac{\alpha}{\alpha+1}} B^{-dj/2}.
\]
Finally, since $\alpha>d/\rho$, we obtain the claimed result.

Now, let us bound the truncation term. Observe that:
\[
\begin{aligned}
\left| T^{\mathrm{trunc}}_{j,k} \right| &=\left|\int_{\mathbb{S}^d} \sum_{j_1\geq J_{N^{\prime}}}\sum_{j_2\geq J_{N^{\prime}}} \sum_{k_1,k_2} g_{j_1,k_1}\psi_{j_1,k_1}(x) g_{j_2,k_2}\psi_{j_2,k_2}(x)\psi_{j,k}(x)\,\mathrm{d}x\right| \\
&\le \int_{\mathbb{S}^d} \left| \sum_{j_1\geq J_{N^{\prime}}} \sum_{k_1} g_{j_1,k_1}\psi_{j_1,k_1}(x)\right| \left| \sum_{j_2\geq J_{N^{\prime}}} \sum_{k_2} g_{j_2,k_2}\psi_{j_2,k_2}(x)\right| \big| \psi_{j,k}(x)\big|\,\mathrm{d}x.
\end{aligned}
\]
Notice that for each coordinate index $m=1,2$:
\[
\begin{aligned}
\left| \sum_{j_m\geq J_{N^{\prime}}} \sum_{k_m} g_{j_m,k_m}\psi_{j_m,k_m}(x)\right| &\le \sum_{j_m\geq J_{N^{\prime}}} \sum_{k_m} \big| g_{j_m,k_m}\big| \big|\psi_{j_m,k_m}(x)\big| \\
&\le \sum_{j_m\geq J_{N^{\prime}}} C B^{dj_m/2} \sum_{k_m} \big| g_{j_m,k_m}\big| \\
&\le \sum_{j_m\geq J_{N^{\prime}}} C B^{dj_m/2} K_{j_m}^{1-\frac{1}{\rho}} \left( \sum_{k_m} \big| g_{j_m,k_m}\big|^\rho \right)^{1/\rho} \\
&\le \sum_{j_m\geq J_{N^{\prime}}} C B^{j_m\left(d/2 + d - d/\rho - \alpha \right)} \\
&\lesssim B^{J_{N^\prime}\left(3d/2 - d/\rho - \alpha\right)} \\
&\lesssim \left( \frac{N}{\log N}\right)^{\frac{3d/2 - d/\rho - \alpha}{2}}.
\end{aligned}
\]
Thus, we have that:
\[
\left| T^{\mathrm{trunc}}_{j,k} \right| \lesssim \left( \frac{N}{\log N}\right)^{3d/2 - \alpha - d/\rho} \left\| \psi_{j,k}\right\|_1.
\]
Analogous reasoning for the cross terms yields:
\[
\begin{aligned}
\left| T^{\mathrm{cross}}_{j,k} \right| &\le 2\int_{\mathbb{S}^d} \left(\sum_{j_1=0}^{J_{N^\prime}-1}\sum_{k_1} \big| g_{j_1,k_1}\big| \big| \psi_{j_1,k_1}(x)\big| \right) \left(\sum_{j_2\geq J_{N^\prime}}\sum_{k_2} \big| g_{j_2,k_2}\big| \big| \psi_{j_2,k_2}(x)\big|\right) \big| \psi_{j,k}(x)\big|\,\mathrm{d}x \\
&\lesssim \left(\sum_{j_1=0}^{J_{\alpha}-1} B^{j_1\left(3d/2-d/\rho \right)} \sum_{k_1} \big| g_{j_1,k_1}\big|^\rho\right) \left(\sum_{j_2\geq J_{N^\prime}} B^{j_2\left(3d/2-d/\rho \right)} \sum_{k_2} \big| g_{j_2,k_2}\big|^\rho \right) \left\|\psi_{j,k}\right\|_1 \\
&\lesssim B^{J_\alpha\left(d - d/\rho - \alpha\right)} B^{J_{N^\prime}\left(d - d/\rho - \alpha\right)} B^{-dj/2} \\
&\lesssim \left(\frac{N}{\log N}\right)^{\frac{d - d/\rho - \alpha}{2(\alpha+1)}} \left(\frac{N}{\log N}\right)^{\frac{3d/2 - d/\rho - \alpha}{2}} B^{-dj/2} \\
&\lesssim \left(\frac{N}{\log N}\right)^{\frac{3d/2 - \alpha^2 - \frac{\alpha+2}{\rho}}{\alpha+1}} B^{-dj/2}.
\end{aligned}
\]
Finally, since $\alpha>d/\rho$, the dominant term is $\left( N /\log N\right)^{-\frac{\alpha}{\alpha+1}}$ and we obtain the claimed parameter bounds.

Let us now consider the global function error $\widehat{\left(g^2\right)}(x)-g^2(x)$ and observe that:
\[
\left| \mathbb{E} \left[\widehat{\left(g^2\right)}(x)-g^2(x) \right] \right| \le \sum_{j=0}^{J_{N^\prime}-1}\sum_{k=1}^{K_j} \left|\mathbb{E}\left[\widehat{\left(g^2\right)}_{j,k}-\left(g^2\right)_{j,k}\right]\right| \big| \psi_{j,k}(x)\big| + \sum_{j\geq J_{N^\prime}} \sum_{k=1}^{K_j} \big| \left(g^2\right)_{j,k}\big| \big| \psi_{j,k}(x)\big|.
\]
Concerning the first term, it holds that:
\[
\begin{aligned}
\sum_{j=0}^{J_{N^\prime}-1}\sum_{k=1}^{K_j} \left|\mathbb{E}\left[\widehat{\left(g^2\right)}_{j,k}-\left(g^2\right)_{j,k}\right]\right| \big| \psi_{j,k}(x)\big| &\lesssim \left(\frac{N}{\log N}\right)^{-\frac{\alpha}{\alpha+1}} \sum_{j=0}^{J_{N^\prime}-1}\sum_{k=1}^{K_j} B^{-dj/2}\left\| \psi_{j,k}\right\|_\infty \\
&\lesssim \left(\frac{N}{\log N}\right)^{-\frac{\alpha}{\alpha+1}+\frac{1}{2}} \\
&\lesssim \left(\frac{N}{\log N}\right)^{\frac{1-\alpha}{2(\alpha+1)}}.
\end{aligned}
\]
Regarding the second term, combining Besov embeddings and Jackson-type inequalities we get:
\[
\begin{aligned}
\sum_{j\geq J_{N^\prime}} \sum_{k=1}^{K_j} \big| \left(g^2\right)_{j,k}\big| \big| \psi_{j,k}(x)\big| &\le \sum_{j\geq J_{N^\prime}} C B^{dj/2} \sum_{k=1}^{K_j} \big| \left(g^2\right)_{j,k}\big| \\
&\le \sum_{j\geq J_{N^\prime}} C B^{dj/2} K_{j}^{1-\frac{1}{\rho}}\sum_{k=1}^{K_j} \big| \left(g^2\right)_{j,k}\big|^\rho \\
&\le \sum_{j\geq J_{N^\prime}} C B^{j\left(3d/2-\alpha-\frac{d}{\rho} \right)} \\
&\lesssim B^{J_{N^\prime}\left(3d/2-\alpha-\frac{d}{\rho}\right)} \\
&\lesssim \left(\frac{N}{\log N}\right)^{\frac{3d/2-\alpha}{2}-\frac{d}{\rho}},
\end{aligned}
\]
as claimed.
\end{proof}

\begin{proof}[Proof of Lemma \ref{lemma:quadraticrisk}]
We show that the moments of $\widehat{g^2}-g^2$ are bounded by the moments of $\widehat{g}^{(m)}-g$, $m=1,2$. The result then follows from the established risk rates of the density estimator \eqref{eq: mean-est}.

To simplify the notation, we work on the product probability space $(\Omega\times\mathbb{S}^d, \mathcal{F}\otimes\mathcal{B}(\mathbb{S}^d), \mathbb{P}\otimes \mathrm{d}x)$. For $1\le p<\infty$, we define the space
\[
L^p(\Omega\times\mathbb{S}^d) := \left\{F:\Omega\times\mathbb{S}^d\to\mathbb{R} \text{ measurable} : \|F\|_{L^p(\Omega\times\mathbb{S}^d)}^p := \mathbb{E}\int_{\mathbb{S}^d}|F(\omega,x)|^p\,\mathrm{d}x < \infty\right\},
\]
equipped with the norm
\[
\left\|F\right\|_{L^p(\Omega\times\mathbb{S}^d)} = \left(\int_{\mathbb{S}^d}\mathbb{E}\big|F(\cdot,x)\big|^p\,\mathrm{d}x\right)^{1/p}.
\]
Starting from the algebraic cross-fitting decomposition
\[
\widehat{g^2}-g^2 = \widehat g^{(1)}(\widehat g^{(2)}-g) + g(\widehat g^{(1)}-g),
\]
we apply the triangle inequality on $L^p(\Omega\times\mathbb{S}^d)$ to obtain
\[
\|\widehat{g^2}-g^2\|_{L^p(\Omega\times\mathbb{S}^d)} \le \left\|\widehat g^{(1)}(\widehat g^{(2)}-g)\right\|_{L^p(\Omega\times\mathbb{S}^d)} + \left\|g(\widehat g^{(1)}-g)\right\|_{L^p(\Omega\times\mathbb{S}^d)}.
\]

For the first term, applying the definition of the product norm, we can pull out the uniform norm of the first factor:
\[
\begin{split}
\left\|\widehat g^{(1)}(\widehat g^{(2)}-g)\right\|_{L^p(\Omega\times\mathbb{S}^d)}&  = \left( \mathbb{E}\int_{\mathbb{S}^d} \left|\widehat g^{(1)}(x)\right|^p \left|\widehat g^{(2)}(x)-g(x)\right|^p \,\mathrm{d}x \right)^{1/p} \\ & \lesssim \left( \mathbb{E}\left[ \left\|\widehat g^{(1)}\right\|_\infty^p \|\widehat g^{(2)}-g\|_{L^p(\mathbb{S}^d)}^p \right] \right)^{1/p}.
    \end{split}
\]
By construction, the cross-fitted sub-samples $\widehat g^{(1)}$ and $\widehat g^{(2)}$ are mutually independent. This independence allows us to factorize the joint expectation exactly:
\[
\mathbb{E}\left[ \left\|\widehat g^{(1)}\right\|_\infty^p \|\widehat g^{(2)}-g\|_{L^p(\mathbb{S}^d)}^p \right] = \mathbb{E}\left[\left\|\widehat g^{(1)}\right\|_\infty^p\right] \mathbb{E}\left[\|\widehat g^{(2)}-g\|_{L^p(\mathbb{S}^d)}^p\right].
\]
Using the global bound $\|g\|_\infty \le G$, the expectation of the uniform norm is bounded via the triangle inequality as:
\[
\mathbb{E}\left\|\widehat g^{(1)}\right\|_\infty^p \le \mathbb{E}\left(G + \left\|\widehat g^{(1)}-g\right\|_\infty\right)^p \le 2^{p-1} \left(G^p + \mathbb{E}\left\|\widehat g^{(1)}-g\right\|_\infty^p\right).
\]
Since the sub-samples are identically distributed, combining these bounds yields:
\[
\mathbb{E}\left\|\widehat{g^2}-g^2\right\|_{L^p(\mathbb{S}^d)}^p \le 2^{p-1}\left(2^{p-1}\left(G^p + \mathbb{E}\left\|\widehat g-g\right\|_\infty^p\right) + G^p\right) \mathbb{E}\|\widehat g-g\|_{L^p(\mathbb{S}^d)}^p.
\]
Whenever $\mathbb{E}\left\|\widehat g-g\right\|_\infty^p = o(1)$ as $N \to \infty$, we obtain the simplified non-asymptotic bound
\[
\left(\mathbb{E}\left\|\widehat{g^2}-g^2\right\|_{L^p(\mathbb{S}^d)}^p\right)^{1/p} \le (2^{\frac{2p-2}{p}}G + o(1)) \left(\mathbb{E}\|\widehat g-g\|_{L^p(\mathbb{S}^d)}^p\right)^{1/p},
\]
which proves that the $L^p$-risk of $\widehat{g^2}$ scales at the same rate as $\widehat{g}$.
\end{proof}

\begin{proof}[Proof of Theorem \ref{thm: h-est}]
The proof follows closely the standard strategy for establishing upper bounds of $L^p$-risks as developed in \cite{BKMP09,Dur15,Dur16,DGM12}. Observe that
\begin{align*}
\mathbb{E} \left[ \left\| \widehat{h} - h \right\|_p^p\right] &= \mathbb{E} \left[ \left\| \sum_{j=0}^{J_N-1}\sum_{k=1}^{K_j}\left(\tilde h_{j,k}-h_{j,k}\right)\psi_{j,k} - \sum_{j\geq J_N}\sum_{k=1}^{K_j}h_{j,k}\psi_{j,k} \right\|_p^p\right] \\
&\leq 2^{p-1} \left( \mathbb{E} \left[ \left\| \sum_{j=0}^{J_N-1}\sum_{k=1}^{K_j}\left(\tilde h_{j,k}-h_{j,k}\right)\psi_{j,k} \right\|_p^p\right] + \left\| \sum_{j\geq J_N}\sum_{k=1}^{K_j}h_{j,k}\psi_{j,k} \right\|_p^p \right) =: \mathbf{S} + \mathbf{B},
\end{align*}
where $\mathbf{S}$ denotes the stochastic error and $\mathbf{B}$ is the deterministic bias error. From \eqref{eqn:JN}, the frequency truncation threshold $J_N$ is selected such that $B^{dJ_N} = \frac{N}{\log N}$.

To bound the stochastic term $\mathbf{S}$, we decompose the summation according to the behavior of the empirical needlet coefficients relative to the threshold $\kappa_h \tau_N$:
\begin{align*}
\mathbf{S} &\leq J_N^{p-1} \left[ \sum_{j=0}^{J_N-1}\mathbb{E} \left[ \left\| \sum_{k=1}^{K_j}\left(\widehat{h}_{j,k}-h_{j,k}\right) \mathbf{1} \left( \left| \widehat{h}_{j,k} \right| \geq \kappa_h \tau_N\right) \mathbf{1} \left( \left| h_{j,k} \right| \geq \frac{\kappa_h}{2} \tau_N\right) \psi_{j,k} \right\|_p^p \right] \right. \\
&\quad + \sum_{j=0}^{J_N-1} \mathbb{E} \left[ \left\| \sum_{k=1}^{K_j}\left(\widehat{h}_{j,k}-h_{j,k}\right) \mathbf{1} \left( \left| \widehat{h}_{j,k} \right| \geq \kappa_h \tau_N\right) \mathbf{1} \left( \left| h_{j,k} \right| < \frac{\kappa_h}{2} \tau_N\right) \psi_{j,k} \right\|_p^p \right] \\
&\quad + \sum_{j=0}^{J_N-1} \mathbb{E} \left[ \left\| \sum_{k=1}^{K_j}h_{j,k} \mathbf{1} \left( \left| \widehat{h}_{j,k} \right| < \kappa_h \tau_N\right) \mathbf{1} \left( \left| h_{j,k} \right| \geq 2\kappa_h \tau_N\right) \psi_{j,k} \right\|_p^p \right] \\
&\quad + \left. \sum_{j=0}^{J_N-1} \mathbb{E} \left[ \left\| \sum_{k=1}^{K_j}h_{j,k} \mathbf{1} \left( \left| \widehat{h}_{j,k} \right| < \kappa_h \tau_N\right) \mathbf{1} \left( \left| h_{j,k} \right| < 2\kappa_h \tau_N\right) \psi_{j,k} \right\|_p^p \right] \right] \\
& =: J_N^{p-1} \left( Aa + Au + Ua + Uu \right).
\end{align*}

\paragraph{\textit{Regular zone}}
Assume that $p \leq r_0\left(\frac{2s_0+d}{2d}\right)$. Examining $Aa$, we split it at the critical scale $B^{dJ_{s_0}}=\left(\frac{N}{\log N}\right)^{\frac{d}{2s_0+d}}$ into $Aa_1 + Aa_2$:

\begin{align*}
Aa &\lesssim \sum_{j=0}^{J_N-1}\sum_{k=1}^{K_j} \mathbb{E} \left[ \left| \widehat{h}_{j,k}- h_{j,k} \right|^p\right] \mathbf{1} \left( \left| h_{j,k} \right| \geq \frac{\kappa_h}{2}\tau_N\right) \left\| \psi_{j,k} \right\|_p^p \\
&\le \sum_{j=0}^{J_{s_0}-1} \sum_{k=1}^{K_j} \mathbb{E} \left[ \left| \widehat{h}_{j,k}- h_{j,k} \right|^p\right] \mathbf{1} \left( \left| h_{j,k} \right| \geq \frac{\kappa_h}{2}\tau_N\right) \left\| \psi_{j,k} \right\|_p^p \\
&\quad + \sum_{j=J_{s_0}}^{J_N-1}\sum_{k=1}^{K_j} \mathbb{E} \left[ \left| \widehat{h}_{j,k}- h_{j,k} \right|^p\right] \mathbf{1} \left( \left| h_{j,k} \right| \geq \frac{\kappa_h}{2}\tau_N\right) \left\| \psi_{j,k} \right\|_p^p \\
&=: Aa_1 + Aa_2.
\end{align*}
The first summation $Aa_1$ yields, under the standard needlet property $\|\psi_{j,k}\|_p^p \approx B^{dj(\frac{p}{2}-1)}$, the following evaluation:
\[
\begin{split}
Aa_1 & \lesssim N^{-\frac{p}{2}} \sum_{j=0}^{J_{s_0}-1} K_j B^{dj(\frac{p}{2}-1)} \\
& \lesssim N^{-\frac{p}{2}} \sum_{j=0}^{J_{s_0}-1} B^{dj\frac{p}{2}} \\
& \lesssim N^{-\frac{p}{2}} B^{dJ_{s_0} \frac{p}{2}} \\
& \lesssim \left(\frac{N}{\log N}\right)^{-\frac{s_0 p}{2s_0+d}},
\end{split}
\]
while $Aa_2$ vanishes identically because $\mathbf{1} \left( \left| h_{j,k} \right| \geq \frac{\kappa_h}{2}\tau_N\right) = 0$ for all scales $j \geq J_{s_0}$.
Consequently, we find
\begin{equation*}\label{eqn:Aareg}
Aa \lesssim \left(\log N\right)^{-\frac{p}{2(s_0+d)}} N^{-\frac{ps_0}{2(s_0+d)}}.
\end{equation*}

Applying a parallel decomposition to $Uu = Uu_1 + Uu_2$, we find:
\begin{align*}
Uu &\lesssim \sum_{j=0}^{J_N-1} \sum_{k=1}^{K_j} \left| h_{j,k} \right|^p \mathbf{1} \left( \left| h_{j,k} \right| < \frac{\kappa_h}{2} \tau_N\right)\left\| \psi_{j,k} \right\|_p^p \\
&\le \sum_{j=0}^{J_{s_0}-1} \sum_{k=1}^{K_j} \left| h_{j,k} \right|^p \mathbf{1} \left( \left| h_{j,k} \right| < \frac{\kappa_h}{2} \tau_N\right) \left\| \psi_{j,k}\right\|_p^p + \sum_{j=J_{s_0}}^{J_N-1}\sum_{k=1}^{K_j} \left| h_{j,k} \right|^p \left\| \psi_{j,k}\right\|_p^p \\
&=: Uu_1 + Uu_2.
\end{align*}
Bounding $Uu_1$ directly yields:
\[
Uu_1 \lesssim \sum_{j=0}^{J_{s_0}-1} \sum_{k=1}^{K_j} \kappa_h^p \tau_N^p B^{dj(\frac{p}{2}-1)} \lesssim \left( \frac{\log N}{N}\right)^{\frac{p}{2}} B^{dJ_{s_0}\frac{p}{2}} \lesssim \left(\frac{N}{\log N}\right)^{-\frac{s_0 p}{2s_0+d}}.
\]
For $Uu_2$, recalling the correct spherical needlet scaling law $\|\psi_{j,k}\|_t \approx B^{dj(\frac{1}{2}-\frac{1}{t})}$, we observe that
\[
\|\psi_{j,k}\|_p^p \approx \|\psi_{j,k}\|_{r_0}^p \, B^{dj\left(\frac{p}{2} - \frac{p}{r_0}\right)}.
\]
Hence, since $r_0 \ge p$, applying Hölder's inequality over the spatial index $k$ yields:
\[
\sum_{k=1}^{K_j}|h_{j,k}|^p\|\psi_{j,k}\|_p^p \le \left(\sum_{k=1}^{K_j}\left(|h_{j,k}|\|\psi_{j,k}\|_{r_0}\right)^{r_0}\right)^{\frac{p}{r_0}} K_j^{1-\frac{p}{r_0}} B^{dj\left(\frac{p}{2} - \frac{p}{r_0}\right)} \lesssim B^{-j p s_0}.
\]
Summing across the remaining high scales results in 
\[
Uu_2 \lesssim \sum_{j=J_{s_0}}^{J_N-1} B^{-j s_0 p} \approx B^{-J_{s_0}s_0 p} \lesssim \left( \frac{N}{\log N}\right)^{-\frac{s_0 p}{2s_0+d}}.
\]

Piecing together $Uu_1$ and $Uu_2$, we see that
\begin{equation*}\label{eqn:Uureg}
Uu \lesssim \left( \log N\right)^{\frac{ps_0}{2s_0+d}} N^{-\frac{ps_0}{2s_0+d}}.
\end{equation*}

The bounds for the cross-terms $Au$ and $Ua$ are strictly driven by the global truncation limit $J_N$, making them invariant under structural transitions to the sparse regime. For $Au$, applying Cauchy-Schwarz followed by the polynomial probability bound $\Pr \left( \left| \widehat{h}_{j,k} - h_{j,k} \right| > \frac{\kappa_h}{2} \tau_N\right) \lesssim N^{-\delta_h}$ yields:
\[
\begin{split}
Au &\le \sum_{j=0}^{J_N-1}\sum_{k=1}^{K_j} \mathbb{E} \left[\left| \widehat{h}_{j,k} -h_{j,k} \right|^{2p} \right]^{\frac{1}{2}} \left\| \psi_{j,k} \right\|_p^p \Pr \left( \left| \widehat{h}_{j,k} - h_{j,k} \right| > \frac{\kappa_h}{2} \tau_N\right)^{\frac{1}{2}} \\
& \lesssim N^{-\frac{p}{2}} N^{-\frac{\delta_h}{2}} \sum_{j=0}^{J_N-1} B^{dj\frac{p}{2}} \\& \lesssim N^{-\frac{p+\delta_h}{2}} B^{d J_N \frac{p}{2}} \\ 
& \lesssim \left( \log N\right)^{-\frac{p}{2}} N^{-\frac{\delta_h}{2}}.
\end{split}
\]
For $Ua$, using the structural property of the true coefficients $\sum_{k=1}^{K_j} |h_{j,k}|^p \|\psi_{j,k}\|_p^p \lesssim B^{-j s_0 p}$, we obtain:
\[
\begin{split}
Ua &\le \sum_{j=0}^{J_N-1} \sum_{k=1}^{K_j} \left| h_{j,k} \right|^p \left\| \psi_{j,k} \right\|_p^p \Pr \left( \left| \widehat{h}_{j,k} - h_{j,k} \right| > \kappa_h \tau_N\right) \\
& \lesssim N^{-\delta_h} \sum_{j=0}^{J_N-1} B^{-js_0p} \\ & \lesssim N^{-\delta_h}.
\end{split}
\]
To ensure that both $Au$ and $Ua$ decay strictly faster than the target minimax rate $\left(\frac{N}{\log N}\right)^{-\frac{s_0 p}{2s_0+d}}$, it suffices to choose a threshold calibration constant $\kappa_h$ large enough such that:
\[
\delta_h > \frac{2s_0 p}{2s_0 + d}.
\]

For the deterministic tail error $\mathbf{B}$, following the framework in \cite{BKMP09,DGM12}, we distinguish two cases based on the space parameters.
If $p \le r_0$, the immediate embedding $B^{s_0}_{r_0,q_0}(\mathbb{S}^d) \subset B^{s_0}_{p,q_0}(\mathbb{S}^d)$ holds, which directly ensures standard needlet approximation performance:
\[
\mathbf{B} \lesssim B^{-J_N s_0 p} \approx \left(\frac{N}{\log N}\right)^{-\frac{s_0 p}{2s_0+d}}.
\]
If $p > r_0$, the non-homogeneous Sobolev embedding on $\mathbb{S}^d$ states that 
\[
B^{s_0}_{r_0,q_0}(\mathbb{S}^d) \subset B^{\,s_0 - d(1/r_0 - 1/p)}_{p,q_0}(\mathbb{S}^d).
\]
This implies:
    \[
    \mathbf{B} \lesssim B^{-J_N p \left(s_0 - d\left(\frac{1}{r_0} - \frac{1}{p}\right)\right)} \approx \left(\frac{N}{\log N}\right)^{-\frac{p\left(s_0 - d\left(\frac{1}{r_0} - \frac{1}{p}\right)\right)}{d}}.
    \]
    Under the assumptions defining the regular regime, we have $r_0 > \frac{2d}{2s_0+d}p$. Straightforward algebraic verification shows that:
    \[
    \frac{p\left(s_0 - d\left(\frac{1}{r_0} - \frac{1}{p}\right)\right)}{d} > \frac{s_0 p}{2s_0+d} \iff s_0 - \frac{d}{r_0} + \frac{d}{p} > \frac{s_0 d}{2s_0+d},
    \]
    which simplifies directly to $r_0 > \frac{p(2s_0+d)}{2s_0+d+p(d/s_0)}$, always satisfied within the boundaries of the regular zone, ensuring $\mathbf{B}$ is asymptotically negligible.

\paragraph{\textit{Sparse regime}}
We now establish the upper bounds under the sparse configuration, where $p > r_0\left(\frac{2s_0 + d}{2d}\right)$. Under this regime, the intermediate frequency truncation level $J_{s_0}$ is calibrated via the relation $B^{dJ_{s_0}}=\left(\frac{N}{\log N}\right)^{\frac{d}{2\left(s_0-d/r_0+\frac{d}{2}\right)}}$.

The bounds for $Au$ and $Ua$ remain invariant under structural transitions to the sparse regime because switching regimes alters only the intermediate threshold scale $J_{s_0}$, whereas their asymptotic bounds are strictly driven by the global truncation limit $J_N$.

The term $Aa$ splits into $Aa \le Aa_1 + Aa_2$. For the low-frequency sparse components ($j < J_{s_0}$), we exploit the fact that the true coefficients lie above the threshold. Applying Markov's inequality to the indicator function with the Besov norm weight yields:
\[
\begin{split}
Aa_1 &\lesssim N^{-\frac{p}{2}} \sum_{j=0}^{J_{s_0}-1} \sum_{k=1}^{K_j}\mathbf{1} \left( \left| h_{j,k} \right| \geq 2\kappa_h \tau_N\right) \left\| \psi_{j,k}\right\|_p^p \\
&  \lesssim N^{-\frac{p}{2}} \sum_{j=0}^{J_{s_0}-1} B^{dj\left(\frac{p}{2}-\frac{r_0}{2}\right) }\tau_N^{-r_0} \sum_{k=1}^{K_j} \left| h_{j,k}\right|^{r_0}\left\| \psi_{j,k}\right\|_{r_0}^{r_0} \\
& \lesssim N^{-\frac{p-r_0}{2}} \left( \log N \right)^{-\frac{r_0}{2}} B^{dJ_{s_0}\left(\frac{p}{2}-\frac{r_0}{2} - \frac{r_0 s_0}{d}\right)} \\
& \lesssim \left(\frac{N}{\log N}\right)^{-\frac{p\left(s_0 - d\left(\frac{1}{r_0} - \frac{1}{p}\right)\right)}{2\left(s_0 - d\left(\frac{1}{r_0} - \frac{1}{2}\right)\right)}},
\end{split}
\]
where we used the exact identification of the spherical scale factor. The high-frequency term $Aa_2$ vanishes identically since no true Besov coefficients can exceed the threshold level $2\kappa_h \tau_N$ for scales $j \geq J_{s_0}$. This establishes:
\begin{equation*}\label{eqn:Aasparse}
Aa  \lesssim \left(\frac{N}{\log N}\right)^{-\frac{p\left(s_0 - d\left(\frac{1}{r_0} - \frac{1}{p}\right)\right)}{2\left(s_0 - d\left(\frac{1}{r_0} - \frac{1}{2}\right)\right)}}.
\end{equation*}

For the small coefficient accumulation $Uu$, we decompose the summation across the intermediate threshold $J_{s_0}$:
\[
\begin{aligned}
Uu & \lesssim \sum_{j=0}^{J_{s_0}-1} \sum_{k=1}^{K_j} \left| h_{j,k} \right|^p \mathbf{1} \left( \left| h_{j,k} \right| < \frac{\kappa_h}{2} \tau_N\right) \left\| \psi_{j,k}\right\|_p^p + \sum_{j=J_{s_0}}^{J_N-1}\sum_{k=1}^{K_j} \left| h_{j,k} \right|^p \mathbf{1} \left( \left| h_{j,k} \right| < \frac{\kappa_h}{2} \tau_N\right)\left\| \psi_{j,k}\right\|_p^p \\
&=: Uu_1 + Uu_2.
\end{aligned}
\]
The first component $Uu_1$ bounds the low-frequency small coefficients. Factoring out the remaining polynomial power via the threshold $\tau_N$ and exploiting the $B^{s_0}_{r_0,q_0}(\mathbb{S}^d)$ control over the coordinates, we find:
\[
\begin{split}
Uu_1 & \lesssim \sum_{j=0}^{J_{s_0}-1} B^{dj\left(\frac{p}{2}-\frac{r_0}{2}\right)}\tau_N^{p-r_0} \sum_{k=1}^{K_j} \left| h_{j,k}\right|^{r_0} \left\|\psi_{j,k}\right\|_{r_0}^{r_0} \\
&\lesssim \left( \frac{\log N}{N}\right)^{\frac{p-r_0}{2}} B^{dJ_{s_0}\left(\frac{p}{2}-\frac{r_0}{2}-\frac{r_0 s_0}{d}\right)} \\ 
& \lesssim \left( \frac{N}{\log N}\right)^{-\frac{p\left(s_0 - d\left(\frac{1}{r_0} - \frac{1}{p}\right)\right)}{2\left(s_0 - d\left(\frac{1}{r_0} - \frac{1}{2}\right)\right)}}.
\end{split}
\]
To control the high-frequency residual $Uu_2$, we introduce the dimensional auxiliary integration parameter $\xi$ as in \cite{BKMP09,DGM12,dt23}:
\[
\xi = \frac{p-2}{2\left(\frac{s_0}{d}-\frac{1}{r_0}+\frac{1}{2}\right)}.
\]
Since we are operating strictly within the sparse zone, the inequality $\xi > r_0$ is guaranteed, meaning the continuous non-homogeneous embedding $B^{s_0}_{r_0,q_0}(\mathbb{S}^d) \subseteq B^{s_0-d(1/r_0-1/\xi)}_{\xi,q_0}(\mathbb{S}^d)$ holds. Leveraging $p > \xi$, we bound the remaining paths using the threshold cut:
\[
\begin{split}
Uu_2 & \lesssim \tau_N^{p-\xi} \sum_{j=J_{s_0}}^{J_N-1} B^{dj\left(\frac{p}{2}-\frac{\xi}{2}\right)} \sum_{k=1}^{K_j} \left| h_{j,k}\right|^{\xi} \left\|\psi_{j,k}\right\|_{\xi}^{\xi} \\
& \lesssim \left( \frac{\log N}{N}\right)^{\frac{p-\xi}{2}} B^{dJ_{s_0}\left( \frac{p}{2}-\frac{\xi}{2} - \frac{s_0 \xi}{d}\right)} \\
& \lesssim \left( \frac{N}{\log N}\right)^{-\frac{p\left(s_0 - d\left(\frac{1}{r_0} - \frac{1}{p}\right)\right)}{2\left(s_0 - d\left(\frac{1}{r_0} - \frac{1}{2}\right)\right)}}.
\end{split}
\]
Combining the bounds for $Uu_1$ and $Uu_2$ yields the final stochastic sparse error rate:
\begin{equation*}\label{eqn:Uusparse}
Uu \lesssim \left( \frac{N}{\log N}\right)^{-\frac{p\left(s_0 - d\left(\frac{1}{r_0} - \frac{1}{p}\right)\right)}{2\left(s_0 - d\left(\frac{1}{r_0} - \frac{1}{2}\right)\right)}}.
\end{equation*}

Finally, the deterministic bias $\mathbf{B}$ in the sparse regime is bounded by the unrecovered tail past the frequency cutoff $J_N$. Applying the same non-homogeneous Sobolev embedding properties on $\mathbb{S}^d$, we get:
\[
\begin{split}
\mathbf{B} &  \lesssim B^{-d J_N \frac{p}{d} \left(s_0 - d\left(\frac{1}{r_0} - \frac{1}{p}\right)\right)} \\ & \le B^{-d J_{s_0} \frac{p}{d} \left(s_0 - d\left(\frac{1}{r_0} - \frac{1}{p}\right)\right)} \\ & \lesssim \left(\frac{N}{\log N}\right)^{-\frac{p\left(s_0 - d\left(\frac{1}{r_0} - \frac{1}{p}\right)\right)}{2\left(s_0 - d\left(\frac{1}{r_0} - \frac{1}{2}\right)\right)}},
\end{split}
\]
which matches the rates of the stochastic components, completing the proof for the sparse zone.

\paragraph{Supremum norm regime ($p=\infty$)}
We finally examine the limit case where $p=\infty$. Let $h$ belong to the Besov space $B^{s_0}_{r_0,q_0}(\mathbb{S}^d)$, which continuously embeds into the non-homogeneous Hölder--Zygmund space $B^{s_0 - d/r_0}_{\infty,\infty}(\mathbb{S}^d)$. The stochastic error term is upper-bounded via the localization and maximum properties of needlets (see e.g., \cite{BKMP09}) as follows:
\[
\begin{split}
\mathbf{S} &\le \sum_{j=0}^{J_N-1} \mathbb{E}\left[\left\| \sum_{k=1}^{K_j} \left( \widehat h_{j,k} \mathbf{1}\left\{\left| \widehat h_{j,k} \right| \geq \kappa_h \tau_N \right\}-h_{j,k} \right)\psi_{j,k}\right\|_{\infty}\right] \\
&\lesssim \sum_{j=0}^{J_N-1} \mathbb{E}\left[\sup_{k=1,\ldots,K_j} \left| \widehat h_{j,k}\mathbf{1}\left\{\left| \widehat h_{j,k} \right| \geq \kappa_h \tau_N \right\} -h_{j,k}\right| \left\|\psi_{j,k} \right\|_{\infty}\right].
\end{split}
\]

Substituting the correct dimensional uniform bound on the sphere, $\|\psi_{j,k}\|_{\infty} \ \lesssim B^{dj/2}$, we split the sum into four sub-terms depending on the threshold configurations:
\[
\begin{split}
\mathbf{S} & \lesssim \sum_{j=0}^{J_N-1} B^{dj/2} \mathbb{E}\left[\sup_{k=1,\ldots,K_j} \left| \widehat h_{j,k} -h_{j,k}\right| \right] \mathbf{1}\left\{ \left| h_{j,k}\right| \geq \frac{\kappa_h}{2}\tau_N\right\} \\
&\quad + \sum_{j=0}^{J_N-1} B^{dj/2} \mathbb{E}\left[\sup_{k=1,\ldots,K_j} \left| \widehat h_{j,k}-h_{j,k}\right| \mathbf{1}\left\{ \left| \widehat{h}_{j,k}-h_{j,k}\right| \geq \frac{\kappa_h}{2}\tau_N\right\} \right] \\
&\quad + \sum_{j=0}^{J_N-1} B^{dj/2} \sup_{k=1,\ldots,K_j} \left| h_{j,k}\right| \mathbb{E}\left[ \mathbf{1}\left\{ \left| \widehat{h}_{j,k}-h_{j,k}\right| \geq \kappa_h\tau_N\right\} \right] \\
&\quad + \sum_{j=0}^{J_N-1} \sup_{k=1,\ldots,K_j} \left| h_{j,k}\right| \left\|\psi_{j,k} \right\|_{\infty} \mathbf{1}\left\{ \left| h_{j,k}\right| \le 2\kappa_h\tau_N\right\} \\
&=: Aa + Au + Ua + Uu.
\end{split}
\]

For $Aa$ and $Uu$, notice that for scales $j \ge J_{s_0}$ (where $B^{dJ_{s_0}} = (\frac{N}{\log N})^{\frac{d}{2(s_0 - d/r_0 + d/2)}}$), the true coefficients are strictly bounded above by the threshold level, causing the indicator in $Aa$ to vanish. Evaluating $Aa$ on the remaining low frequencies yields:
\[
\begin{split}
Aa &  \lesssim \sum_{j=0}^{J_{s_0}-1} B^{dj/2} \mathbb{E}\left[\sup_{k=1,\ldots,K_j} \left| \widehat h_{j,k} -h_{j,k}\right| \right] \\
& \lesssim J_{s_0} N^{-\frac{1}{2}} B^{dJ_{s_0}/2} \\
& \lesssim J_{s_0} \left( \frac{N}{\log N}\right)^{-\frac{s_0 - d/r_0}{2\left(s_0 - d\left(\frac{1}{r_0} - \frac{1}{2}\right)\right)}}.
\end{split}
\]
For the small coefficient term $Uu$, the embedding $B^{s_0}_{r_0,q_0}(\mathbb{S}^d) \subseteq B^{s_0 - d/r_0}_{\infty,\infty}(\mathbb{S}^d)$ implies $\sup_k |h_{j,k}| \lesssim B^{-j(s_0 - d/r_0 + d/2)}$. Splitting at the intermediate scale $J_{s_0}$ gives:
\[
\begin{split}
Uu & \lesssim \tau_N B^{dJ_{s_0}/2} + \sum_{j\geq J_{s_0}} B^{-j\left(s_0 - \frac{d}{r_0}\right)} \\ 
&\lesssim \left(\frac{\log N}{N}\right)^{\frac{1}{2}} B^{dJ_{s_0}/2} + B^{-J_{s_0}\left(s_0 - \frac{d}{r_0}\right)} \\ 
& \lesssim \left( \frac{N}{\log N}\right)^{-\frac{s_0 - d/r_0}{2\left(s_0 - d\left(\frac{1}{r_0} - \frac{1}{2}\right)\right)}}.
\end{split}
\]
The remaining cross terms $Ua$ and $Au$ are bounded by $N^{-\delta_h/2}$ for $\kappa_h$ chosen sufficiently large, making them asymptotically negligible compared to the primary rates (see also \cite[Proposition 15]{BKMP09}).

We conclude by bounding the deterministic supremum bias term $\mathbf{B}$ using the standard Jackson inequality under the embedded Hölder-Zygmund regularity:
\[
\begin{split}
\mathbf{B} & \le \sum_{j\geq J_N} \left\|\sum_{k=1}^{K_j} h_{j,k}\psi_{j,k} \right \|_{\infty} \\
& \lesssim \sum_{j\geq J_N} B^{-j\left(s_0 - \frac{d}{r_0}\right)} \\ 
& \lesssim B^{-J_N\left(s_0 - \frac{d}{r_0}\right)} \\ 
& \approx \left(\frac{N}{\log N} \right)^{-\frac{s_0 - \frac{d}{r_0}}{d}}.
\end{split}
\]
Since $J_N$ is the maximal frequency cutoff, we have $\frac{s_0 - d/r_0}{d} \ge \frac{s_0 - d/r_0}{2(s_0 - d/r_0 + d/2)}$ for any valid choice of parameters, ensuring that the deterministic tail is dominated by the stochastic error, completing the proof.
\end{proof}

\begin{proof}[Proof of Theorem \ref{thm:V-upper}]
By applying Theorem \ref{thm: h-est} individually to $\widehat h - h$ and to the plug-in estimator $\widehat{(g^2)} - g^2$, we obtain two independent risk rates determined by the parameters $(\beta, r_0)$ and $(2\alpha, \rho/2)$ respectively. For any $p \ge 1$, the upper bound is dominated by the maximum of the two expected errors, meaning the active convergence rate scales as the minimum of the exponents:
\[
\mathbb{E}\left\|\widehat V - V\right\|_{L^p(\mathbb{S}^d)}^p \lesssim \left(\frac{N}{\log N}\right)^{-\mathcal{R}_{h}(p)p} + \left(\frac{N}{\log N}\right)^{-\mathcal{R}_{g^2}(p)p} \approx \left(\frac{N}{\log N}\right)^{-\min\{\mathcal{R}_{h}(p), \mathcal{R}_{g^2}(p)\}p}.
\]
The phase boundaries and algebraic transitions for $\mathcal{R}$ follow directly by finding the lower envelope of $\mathcal{R}_{h}(p)$ and $\mathcal{R}_{g^2}(p)$ across the parameter space, matching the four cases.
\end{proof}

\begin{proof}[Proof of Theorem~\ref{th:lower_main}]
The proof is based on information-theoretic reductions. We construct a needlet-based packing set under the Besov constraints, evaluate the localized Kullback–Leibler (KL) divergences via scale Fisher information, and apply Fano's lemma (see also \cite{BKMP09,tsyb09}). 

To establish the lower bounds for the two components, we construct two separate perturbation schemes: one perturbing the mean companion function $h$ (with $g=0$, so $V=h$), and one perturbing the regression function $g$ around a strictly positive baseline $g_0 \ge c_0 > 0$. Let $(s, r)$ denote the generic smoothness and integrability parameters of the specific component under analysis, where $(s, r) = (\beta, r_0)$ when perturbing $h$, and $(s, r) = (\alpha, \rho)$ when perturbing $g$.

\paragraph{\textit{Reduction to sparse and regular alternatives}}
For a given target resolution level \(j\), we select a subset \(A_j\) of the needlet dictionary on $\mathbb{S}^d$ such that its cardinality reflects the effective dimensionality of the estimation problem in the corresponding regime. More precisely, we set
\[
|A_j| \approx
\begin{cases}
B^{dj}, & \text{in the regular (dense) zone},\\
B^{dj\left(1-\frac{d}{r}\right)}, & \text{in the sparse zone},
\end{cases}
\]
where \(r \in \{r_0, \rho\}\) denotes the integrability index of the active perturbed component.

We then construct a family of perturbations of the active function (either $h$ or $g$) of the form
\[
f_\theta(x) = f_0(x) + \delta \sum_{k \in A_j} \theta_k \, \psi_{j,k}(x),
\qquad
\theta \in \{-1,+1\}^{|A_j|},
\]
where $\delta>0$ is the perturbation amplitude, and $f_0$ is a smooth baseline function. To ensure that the perturbed function $f_\theta$ belongs to its respective Besov class, the amplitude $\delta$ is chosen as:
\[
\delta \approx
\begin{cases}
B^{-j(s +\frac{d}{2})}, & \text{regular zone},\\[1ex]
B^{-j\left(s - \frac{d}{r} + \frac{d}{2}\right)}, & \text{sparse zone}.
\end{cases}
\]
Under this construction, the resulting perturbation on the variance profile $V_\theta(x) - V_0(x)$ scales as $\delta \sum_{k \in A_j} \theta_k \psi_{j,k}(x)$ when perturbing $h$. When perturbing $g$ (assuming a strictly positive baseline $g_0 \ge c_0 > 0$), the first-order variation of $V = h - g^2$ satisfies $V_\theta - V_0 = -(2 g_0 (g_\theta - g_0) + (g_\theta - g_0)^2) \approx -2 g_0 \delta \sum_{k \in A_j} \theta_k \psi_{j,k}(x)$, which preserves the same linear scaling of the perturbation amplitude $\delta$ on the variance. Hence, in both settings, the active perturbation amplitude on the variance scales as $\delta_V \approx \delta$.

\paragraph{\textit{Lower bounding the $L^p$-distance}}
Using the near-orthogonality of the needlets on \(A_j\), the $L^p$-distance between any two perturbed variance configurations satisfies:
\[
\|V_\theta - V_{\theta'}\|_{L^p}^p
= \delta_V^p \sum_{k\in A_j} |\theta_k - \theta^\prime_k|^p \, \|\psi_{j,k}\|_{L^p}^p
\approx \delta_V^p |A_j| B^{jd\left(\frac{p}{2}-1\right)},
\]
which simplifies directly to:
\[
\|V_\theta - V_{\theta'}\|_{L^p} \approx \delta_V \, |A_j|^{1/p} B^{jd\left(\frac{1}{2}-\frac{1}{p}\right)}.
\]

\paragraph{\textit{Kullback--Leibler divergence via Fisher information in scale}}
Conditionally on $X=x$, the observation follows the scale model
\[
Y = g(x) + \sigma(x)\varepsilon,
\]
where $\varepsilon$ has density $f_\varepsilon$. Assume that $f_\varepsilon$ is continuously differentiable and that the Fisher information for the scale parameter,
\[
\mathcal{I}_{\mathrm{sc}} := \mathbb{E}\left[\left(1+\varepsilon \frac{f_\varepsilon^\prime(\varepsilon)}{f_\varepsilon(\varepsilon)}\right)^2\right],
\]
is finite (see Chapter~2 and Section~6.2 of \cite{tsyb09}).

Let $V_\theta,V_{\theta^\prime}$ be two variance configurations and define $\sigma_\theta(x)=\sqrt{V_\theta(x)}$ and $\sigma_{\theta^\prime}(x)=\sqrt{V_{\theta^\prime}(x)}$. Under the uniform positivity condition $V_\theta(x)\ge v_0>0$, the Kullback--Leibler divergence is controlled pointwise by:
\[
\mathrm{KL}\left( P_{\sigma_\theta(x)\varepsilon} \mid P_{\sigma_{\theta^\prime}(x)\varepsilon} \right) \le \frac{\mathcal{I}_{\mathrm{sc}}}{8v_0^2} \left(V_\theta(x)-V_{\theta^\prime}(x)\right)^2.
\]
By the conditional independence of the observations given the design $(X_i)_{i=1}^N$, taking the expectation with respect to the empirical design and using the near–orthogonality of the needlets indexed by $A_j$, we find:
\[
\mathbb{E}[\ell(\theta,\theta^\prime)] \le \frac{\mathcal{I}_{\mathrm{sc}}}{8v_0^2} \sum_{i=1}^N \mathbb{E}\left[\left(V_\theta(X_i)-V_{\theta'}(X_i)\right)^2\right] \lesssim \frac{\mathcal{I}_{\mathrm{sc}}}{v_0^2} N\delta_V^2 |A_j| B^{-dj}.
\]

\paragraph{\textit{Packing via Varshamov--Gilbert and Fano's lemma}}
By the Varshamov--Gilbert bound, there exists a subset \(\Theta \subset \{-1,+1\}^{|A_j|}\) such that the Hamming distance between any two distinct elements satisfies $d_H(\theta,\theta^\prime) \ge \frac{|A_j|}{8}$, with $\log |\Theta| \ge c_1 |A_j|$ for a positive constant $c_1 > 0$.
The near-orthogonality of needlets on $A_j$ guarantees that the \(L^2\)-distance between any two distinct hypotheses satisfies:
\[
\|V_\theta - V_{\theta'}\|_{L^2}^2 \approx \delta_V^2 \, d_H(\theta,\theta') \, B^{-dj} \ge c_2 \delta_V^2 |A_j| B^{-dj}.
\]
We now formally apply Fano's inequality. Let $Y^{(N)} = (Y_1, \ldots, Y_N)$ denote the observed sample. For any estimator $\widehat{V}$, we define the multi-hypothesis tester $\widehat{\theta} = \arg\min_{\theta \in \Theta} \|\widehat{V} - V_{\theta}\|_{L^p}$. Since $d_H(\theta, \theta') \ge |A_j|/8$, the $L^p$-separation ensures that if $\|\widehat{V} - V_\theta\|_{L^p} < \frac{1}{2} \min_{\theta \neq \theta'} \|V_\theta - V_{\theta'}\|_{L^p}$, then $\widehat{\theta} = \theta$. Thus, the minimax risk is bounded from below via the testing error probability:
\[
\sup_{(g,h)} \mathbb{E}\|\widehat{V} - V\|_{L^p}^p \ge \left( \frac{1}{2} \min_{\theta \neq \theta'} \|V_\theta - V_{\theta'}\|_{L^p} \right)^p \cdot \inf_{\widetilde{\theta}} \max_{\theta \in \Theta} \mathbb{P}_{\theta} \left( \widetilde{\theta} \neq \theta \right).
\]
According to Fano's lemma, the testing error probability over the packing set $\Theta$ is bounded from below by:
\[
\inf_{\widetilde{\theta}} \max_{\theta \in \Theta} \mathbb{P}_{\theta} \left( \widetilde{\theta} \neq \theta \right) \ge 1 - \frac{\frac{1}{|\Theta|^2}\sum_{\theta, \theta' \in \Theta} \mathbb{E}[\ell(\theta, \theta')] + \log 2}{\log |\Theta|}.
\]
To ensure this error probability is bounded away from zero by a positive constant (e.g., $\ge 1/2$), we require the Kullback-Leibler divergence to satisfy:
\[
\frac{1}{|\Theta|^2}\sum_{\theta, \theta' \in \Theta} \mathbb{E}[\ell(\theta, \theta')] \le \alpha \log |\Theta|,
\]
for some sufficiently small constant $\alpha \in (0, 1/2)$. Given that $\log |\Theta| \ge c_1 |A_j|$, this condition is satisfied if:
\[
\frac{\mathcal{I}_{\mathrm{sc}}}{v_0^2} N\delta_V^2 |A_j| B^{-dj} \le \alpha c_1 |A_j| \quad \Longrightarrow \quad \frac{N \delta_V^2}{v_0^2} B^{-dj} \le C_0 |A_j|,
\]
which is precisely our targeted calibration constraint.

\paragraph{\textit{Choice of resolution level \(j\) and extraction of minimax rates}}
We select the optimal resolution level \(j\) to balance the Fano calibration constraint.

\paragraph{\textit{Regular zone}}
Here, $|A_j| = B^{dj}$ and $\delta_V \approx B^{-j(s + d/2)}$. The calibration constraint requires:
\[
N B^{-2j(s + d/2)} B^{-dj} \lesssim B^{dj} \quad\Longrightarrow\quad B^{j(2s+d)} \approx N \quad\Longrightarrow\quad B^j \approx N^{\frac{1}{2s+d}}.
\]
Substituting this optimal $B^j$ into the $L^p$-distance yields:
\[
\|V_\theta - V_{\theta'}\|_{L^p} \approx B^{-j(s+d/2)} B^{dj/p} B^{jd(1/2-1/p)} = B^{-js} \approx N^{-\frac{s}{2s+d}},
\]
which proves the regular rate exponent $\mathcal{R}_{\rm reg}(s) = \frac{s}{2s+d}$.

\paragraph{\textit{Sparse zone}}
Here, $|A_j| \approx B^{dj(1-d/r)}$ and $\delta_V \approx B^{-j(s - d/r + d/2)}$. The calibration constraint requires:
\[
N B^{-2j\left(s - \frac{d}{r} + \frac{d}{2}\right)} B^{-dj} \lesssim B^{dj\left(1-\frac{d}{r}\right)} \quad\Longrightarrow\quad B^{j\left(2s - \frac{d}{r} + \frac{d}{2}\right)} \approx N \quad\Longrightarrow\quad B^j \approx N^{\frac{1}{2s - d/r + d/2}}.
\]
Substituting this scale back into the $L^p$-distance yields:
\[
\|V_\theta - V_{\theta'}\|_{L^p} \approx B^{-j\left(s - \frac{d}{r} + \frac{d}{2}\right)} B^{\frac{dj}{p}\left(1-\frac{d}{r}\right)} B^{jd\left(\frac{1}{2}-\frac{1}{p}\right)} = B^{-j\left(s - d\left(\frac{1}{r}-\frac{1}{p}\right)\right)} \approx N^{-\frac{s-d\left(1/r-1/p\right)}{2s-d\left(1/r-1/2\right)}},
\]
which proves the sparse rate exponent $\mathcal{R}_{\rm sp}(s,r,p) = \frac{s-d(1/r-1/p)}{2s-d(1/r-1/2)}$.

Applying this construction separately to $(s,r) = (\beta, r_0)$ and $(s,r) = (\alpha, \rho)$ yields the claimed minimax lower bounds.
\end{proof}

\section*{Acknowledgments}
The authors gratefully acknowledge Domenico Marinucci for insightful suggestions
and for sharing ideas that motivated and shaped this work.
They also thank Anna Paola Todino and Francesco Grotto for the organization of the
workshop where the authors first started discussing the problem addressed in this paper.

\section*{Funding}
CD was partially supported by Progetti di Ateneo Sapienza RG1221815C353275 (2022), RM123188F69A66C1 (2023), RG1241907D2FF327 (2024) and PRIN 2022 - GRAFIA - 202284Z9E4.
RS was supported in part by by ANR IRIMA (ANR-22-EXIR-0008).

\printbibliography

\end{document}